\def\@@insvline#1#2{{\setbox0\hbox{\m@th$#1\mathrm I$}  
  \rlap{\m@th$#1 \mkern 5mu  
  \vrule height.95\ht0 depth-.005\ht0 width.09\ht0 $}  
  {\mathrm #2} }}
\def\Q{\mathpalette\@@insvline{Q}}
  \newtheorem{defi}{Definition}
  \newcommand{\bd}{\begin{defi}} 
  \newcommand{\ed}{\end{defi}}
  \newtheorem{lemm}[defi]{Lemma}  
  \newcommand{\bl}{\begin{lemm}}
  \newcommand{\el}{\end{lemm}} 
  \newtheorem{theo}[defi]{Theorem}
  \newcommand{\bt}{\begin{theo}}
  \newcommand{\et}{\end{theo}}
  \newtheorem{cor}[defi]{Corollary}
  \newcommand{\bc}{\begin{cor}}
  \newcommand{\ec}{\end{cor}}
  \newtheorem{pro}[defi]{Proposition}
  \newcommand{\bp}{\begin{pro}}
  \newcommand{\ep}{\end{pro}}
  \def\proof{\@ifnextchar[\opargproof{\opargproof[\bf Proof \hfil\\ ]}}
  \def\opargproof[#1]{\par\noindent {\bf #1 }}
   \newcommand{\bydef}{\overset{def}{=}}
\DeclareFontFamily{OT1}{nice}{}
\DeclareFontShape{OT1}{nice}{m}{n}{<5> <6> <7> <8> <9> <10>
<12><10.95><14.4><17.28><20.74><24.88>callig15}{}
\DeclareFontFamily{U}{nice}{}
\DeclareFontShape{U}{nice}{m}{n}{<5> <6> <7> <8> <9> <10>
<12><10.95><14.4><17.28><20.74><24.88>callig15}{}
\DeclareSymbolFont{calligra}{U}{nice}{m}{n}
\DeclareSymbolFontAlphabet{\nice}{calligra}
\DeclareFontFamily{OT1}{cmdh}{}
\DeclareFontShape{OT1}{cmdh}{m}{n}{<10>cmdunh10}{}
\def\epsilon{\varepsilon}  
\def\phi{\varphi}
\def\div{\operatorname{div}}
\def\curl{\operatorname{curl}}
\newtheorem{rmq}{Remark}
\newtheorem{lemma}{Lemma}
\newtheorem{prop}{Proposition}
\newtheorem{defin}{Definition}
\newcommand{\norm}[1]{\left\Vert #1\right\Vert}
\begin{document} 

\title{Uniqueness result for the 3-D Navier-Stokes-Boussinesq Equations with Horizontal Dissipation}
\bigskip


\author{ Pierre Dreyfuss\thanks{Universit\'e C\^ote d’Azur, CNRS, LJAD, France (email: pierre.dreyfuss@univ-cotedazur.fr).}, Haroune Houamed\thanks{Universit\'e C\^ote d’Azur, CNRS, LJAD, France (email: haroune.houamed@univ-cotedazur.fr).} \footnote{Corresponding author.}} 

\maketitle

\abstract{\noindent In this paper, for the 3-D Navier-Stokes-Boussinesq system with horizontal dissipation, where there is no smoothing effect on the vertical derivatives, we prove a uniqueness result of solutions $ (u,\rho)\in L^{\infty}_T\big( H^{0,s}\times H^{0,1-s}\big)$ with  $ (\nabla_h u,\nabla_h\rho)\in L^{2}_T\big( H^{0,s}\times H^{0,1-s}\big)$ and $s\in [\frac{1}{2},1]$. As a consequence, we improve the conditions stated in the paper \cite{Miao} in order to obtain a global well-posedness result in the case of axisymmetric initial data.
} 

\vspace{0.2 cm} 

\noindent 
{\it keywords:}
Boussinesq system, Horizontal dissipation, Anisotropic inequalities, Uniqueness, 
Global well-posedness. \\ 
2010 MSC: 76D03, 76D05, 35B33, 35Q35.
 \tableofcontents
 
\section{Introduction and main results}

The Navier-Stokes-Boussinesq system is obtained from the density dependent Navier-Stokes equations by using the Boussinesq approximation. It is widely used to model geophysical flows 
(for instance oceanical or atmospherical flows) whenever rotation and stratification play an important role (see \cite{pedlo}). We will consider the following so-called Navier-Stokes-Boussinesq equations with horizontal dissipation:\\
\begin{equation}\label{NSB_h}
  \left\{\begin{array}{l}
\big(\partial_t+u\cdot\nabla  \big)u -\Delta_hu+\nabla P=\rho e_3, \quad \text{in } \mathbb R^+\times \mathbb R^3,\\
\big(\partial_t+u\cdot\nabla  \big)\rho -\Delta_h\rho =0,\\
\div u=0,\\
(u,\rho)_{|t=0}=(u_0,\rho_0),
\end{array}\right. \tag{$NSB_h$} 
\end{equation}
where $\Delta_h \bydef \partial_1^2 + \partial_2^2$ denotes the horizontal laplacian and 
$e_3=(0,0,1)^T$ is the third vector of the canonical basis of $\mathbb R^3$. \\ 
The unknowns of the system are $u=(u^1,u^2,u^3)$, $\rho$ and $P$ which represent respectively: the velocity, the density and the pressure of the fluid. \\ 

\noindent In the following we will say that $(u,\rho)$ is a solution to $\eqref{NSB_h}$ if it is a weak solution in the classical sense (see for instance \cite{Chemin} pages 123,132 and 204). We recall also that from a solution $(u,\rho)$ we may use a result of De Rham in order to recover a pressure $P$ (which depends on $u$ and $\rho$) and to obtain a distributional solution 
$(u,\rho,P)$ of the system \eqref{NSB_h}. \\ 

\noindent Note that in \eqref{NSB_h} the diffusion only occurs in the horizontal direction. This is a natural assumption for several cases of interest in geophysical fluids flows (see \cite{pedlo}). However $-\Delta_h$ is a less regularizing operator than the laplacian $-\Delta$ and we cannot expect a better theory than for the classical Navier-Stokes-Boussinesq equations: 
\begin{equation}\label{NSB}
 \left\{\begin{array}{l}
\big(\partial_t+u\cdot\nabla  \big)u -\Delta u+\nabla P=\rho e_3, \quad \text{in } \mathbb R^+\times \mathbb R^3,\\
\big(\partial_t+u\cdot\nabla  \big)\rho -\Delta\rho =0,\\
\div u=0,\\
(u,\rho)_{|t=0}=(u_0,\rho_0).
\end{array}\right.\tag{$NSB$}
\end{equation}
\noindent In particular, the question of the global well-posedness of \eqref{NSB} and consequently of \eqref{NSB_h} remains 
largely open, but recently the system \eqref{NSB_h} has received a lot of attention from mathematicians (see for instance \cite{Miao,Adhikari,Wu}) and significant progress in its analysis have been made. See also \cite{HHZ, Hmidi1, Hmidi2, HZ} for more related results for other models of the Boussinesq system.\\ 

\noindent Note again that \eqref{NSB_h} involves the operator $-\Delta_h$ which smooth only along the horizontal variables. Hence, we need to estimate differently the horizontal and the vertical directions,  and the natural functional setting for the analysis involves some anisotropic Sobolev and Besov spaces. The definitions of these spaces and some of their important properties are recalled in the next section. \\ 

\noindent In order to analyse \eqref{NSB_h} it is useful to forget its second equation for a while, and 
to consider first the Navier-Stokes equations with horizontal laplacian:
\begin{equation}
\quad \left\{\begin{array}{l}
\big(\partial_t+u\cdot\nabla  \big)u -\Delta_h u+\nabla P=0, \quad \text{in } \mathbb R^+\times \mathbb R^3,\\
\div u=0,\\
u_{|t=0}=u_0.
\end{array}\right.\tag{$NS_h$}
\end{equation}
\noindent Several interesting studies for this last system were done. In \cite{Chemin}, the authors proved the local existence and the global one for small data in $H^{0,s}$ for some $s>\frac{1}{2}$. The proof of the existence part in \cite{Chemin} uses deeply the structure of the equation and the fact that $u$ is a divergence free vector field. The key point used in their estimates is related to the fact\footnote{Recall that this argument permits to prove the uniqueness of weak-solution for the classical Navier-Stokes problem in dimension two.} that $H^{\frac{1}{2}}(\mathbb{R}^2) \hookrightarrow L^4(\mathbb{R}^2)$ and that $H^s(\mathbb{R})$ is an algebra. Hence, it is easy to deal with the term $u^h\cdot \nabla_h u$ by using some product rules in the well-chosen spaces. Next, after using the divergence free condition together with some Littlewood-Paley stuffs in a clever way they were able to treat the term $u^3\partial_3 u$ with the same argument. Always in \cite{Chemin}, the authors proved also a uniqueness result (but only for $s>\frac{3}{2}$, because of the term $w^3\partial_3 u$) by establishing a $H^{0,s_0}$-energy estimate for a difference between two solutions $w=u-v$, where $s_0\in]\frac{1}{2},s]$. Later, in \cite{Ifti3}, D.Iftimie had overcome the difficulty by remarking that it is sufficient to estimate $w$ in $H^{-\frac{1}{2}}$ with respect to the vertical variable, and this only requires an $H^{\frac{1}{2}}$ regularity for $u$ in the vertical direction. Then he proved a uniqueness result for any $s>\frac{1}{2}$, and the gap between existence and uniqueness was closed. \\   

\noindent To do something similar with system \eqref{NSB_h}, we begin by estimating the horizontal terms (terms which contain only horizontal derivatives) by using some product rules in the adequate Besov and Sobolev spaces. For the vertical terms (terms which contain only vertical derivatives) we follow in general the idea in \cite{Chemin} in order to transform them into terms similar to the horizontal ones by using the divergence free condition. Hence, for $s\in]\frac{1}{2},1]$, we first propose to estimate the difference between two solutions $w=u-v$ in $H^{0,s-1}$ instead of $H^{0,-\frac{1}{2}}$ providing that the solution $u$ already exists in the $H^{0,s}$ energy-space (see Appendix for a proof of an existence result). For the second equation, denoting the difference between two solutions $\theta=\rho_1-\rho_2$ we remark that:
\begin{itemize}
\item The function $\rho$ only appears in the third equation of $u$ (the equation for the component $u^3$). Hence, a priori, we only need to estimate $\rho$ in the $H^{s-1}$-norm with respect to the vertical variable.  
\item In order to deal with the term $u^h \cdot\nabla_h \theta$, we must estimate $\theta$ with respect to the vertical variable in some space $H^{-\alpha}$, with $\alpha\geq 0$ and such that $H^s(\mathbb{R}) \times H^{-\alpha}(\mathbb{R})$ holds to be a subspace of $H^{-\alpha}(\mathbb{R})$. In fact, Lemma \ref{productrule1} bellow says that the minimum index $-\alpha$ that can be chosen is $-\alpha=-s$. 
\item  For the term $w^h \cdot \nabla_h \rho$, if we consider that $\rho$ lies in some $H^{\beta}$-space, with respect to the vertical variable, then a direct application of the product rules shows that we need  
$\beta\geq 1-s$. Moreover, because the system is hyperbolic in the vertical direction, we expect the loss 
of one derivative. 
\end{itemize}
Hence, we will estimate vertically $\rho$ in $H^{1-s}$ and $\theta$ in $H^{-s}$. \\ 

\noindent For the critical case where $s=\frac{1}{2}$, in \cite{Paicu2} M.Paicu proved a uniqueness\footnote{We should mention that the existence of solution in such scaling-invariant space is still an open problem even for the classical Navier-Stokes system.} result for $(NS_h)$ in $L^{\infty}_T(H^{0,\frac{1}{2}}) \cap L^2_T (H^{1,\frac{1}{2}})$. It is clear that such a space falls to be embedded in $L^{\infty}$ in the vertical direction which is the major problem that prevents using similar arguments to those in the case where $s>\frac{1}{2}$. In order to prove the uniqueness, the author in \cite{Paicu2} established a double logarithm estimate (see \eqref{cases=1/2}) and concluded by using the Osgood's Lemma. We will provide some details to adapte the idea of \cite{Paicu2} and apply it to \eqref{NSB_h}. \\ 
  
\noindent Our main result is the following:
\begin{theo}{\textbf{(Uniqueness)}} \label{th1} \\
Let $s\in[\frac{1}{2},1]$ and $(u,\rho),(v,\eta)$ be two solutions for system \eqref{NSB_h} in $$L^{\infty}_{loc}(\mathbb{R}_+;H^{0,s})\cap L^{2}_{loc}(\mathbb{R}_+;H^{1,s})\times L^{\infty}_{loc}(\mathbb{R}_+;H^{0,1-s})\cap L^{2}_{loc}(\mathbb{R}_+;H^{1,1-s}). $$
Then $(u,\rho)=(v,\eta).$
\end{theo}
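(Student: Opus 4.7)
Set $w := u - v$ and $\theta := \rho - \eta$; subtracting the two copies of $(NSB_h)$, the pair $(w,\theta)$ satisfies the linearized system
\begin{align*}
\partial_t w - \Delta_h w + \nabla Q &= -u\cdot\nabla w - w\cdot\nabla v + \theta e_3,\\
\partial_t \theta - \Delta_h \theta &= -u\cdot\nabla \theta - w\cdot\nabla \eta,\\
\div w &= 0, \qquad (w,\theta)|_{t=0}=(0,0).
\end{align*}
Guided by the heuristic preceding the theorem, I would perform anisotropic energy estimates on $w$ in $H^{0,s-1}$ and on $\theta$ in $H^{0,-s}$. These are the weakest spaces in which the horizontal dissipation $\|\nabla_h w\|_{H^{0,s-1}}^2 + \|\nabla_h\theta\|_{H^{0,-s}}^2$ is available and in which the coupling $\theta e_3$ remains tame via duality and Young's inequality against the dissipative term.

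The next step is to control each nonlinear term. Splitting the convection terms into horizontal and vertical contributions, the horizontal pieces $u^h\cdot\nabla_h w$, $w^h\cdot\nabla_h v$, $u^h\cdot\nabla_h\theta$ and $w^h\cdot\nabla_h\eta$ are estimated by the anisotropic product rules combined with Bony paraproducts, which yield bounds of the type $\|\cdot\|^{1/2}\|\nabla_h\cdot\|^{1/2}$ in the appropriate $H^{0,\sigma}$ norm and hence can be absorbed by the horizontal diffusion up to a Gronwall remainder. The vertical pieces $u^3\partial_3 w$, $w^3\partial_3 v$, $u^3\partial_3\theta$ and $w^3\partial_3\eta$ are subtler; following the strategy of \cite{Chemin,Ifti3} I would exploit $\div u = \div w = 0$ to write $u^3(x_h,x_3) = -\int_0^{x_3}\div_h u^h(x_h,y)\,dy$ and similarly for $w^3$, trading each vertical derivative for a horizontal one, after which the same product rules apply.

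Collecting everything, for $s \in (1/2,1]$ one arrives at a linear differential inequality
\[
\frac{d}{dt}\bigl(\|w\|_{H^{0,s-1}}^2 + \|\theta\|_{H^{0,-s}}^2\bigr) \le \Phi(t)\,\bigl(\|w\|_{H^{0,s-1}}^2 + \|\theta\|_{H^{0,-s}}^2\bigr),
\]
with $\Phi \in L^1_{loc}(\mathbb{R}_+)$ built solely from the energy norms of the two solutions; Gronwall and the vanishing initial data yield $(w,\theta)\equiv 0$.

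The main obstacle is the critical endpoint $s = 1/2$, where the vertical embedding $H^{1/2}(\mathbb{R})\hookrightarrow L^\infty(\mathbb{R})$ fails and the above product rules degrade by a logarithmic factor. Here, following Paicu \cite{Paicu2}, I would refine the Bony decomposition of each trilinear term so as to isolate the borderline contributions and bound them through a double-logarithm estimate, ending up with a differential inequality of Osgood type
\[
\frac{d}{dt} X(t) \le \Phi(t)\, X(t)\log\!\Bigl(e + \tfrac{1}{X(t)}\Bigr)\log\!\log\!\Bigl(e^e + \tfrac{1}{X(t)}\Bigr),
\]
for $X(t) := \|w(t)\|_{H^{0,-1/2}}^2 + \|\theta(t)\|_{H^{0,-1/2}}^2$, after which Osgood's lemma forces $X\equiv 0$ and concludes the argument.
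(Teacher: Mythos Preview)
Your overall architecture is the paper's: estimate $w$ in $H^{0,s-1}$ and $\theta$ in $H^{0,-s}$, split each convection term into horizontal and vertical parts, absorb horizontal pieces via anisotropic product rules and interpolation, and close by Gronwall for $s\in(1/2,1]$ and by Paicu's double-log/Osgood argument at $s=1/2$. That much matches.

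The gap is in your treatment of the vertical transport-type terms $\langle u^3\partial_3 w,\,w\rangle_{0,s-1}$ and $\langle u^3\partial_3\theta,\,\theta\rangle_{0,-s}$. Writing $u^3(x_h,x_3)=-\int_0^{x_3}\div_h u^h\,dy$ does \emph{not} by itself trade the vertical derivative on $w$ (or $\theta$) for a horizontal one: you still have $\partial_3 w\in H^{0,s-2}$, and no product rule closes since $s+(s-2)\le 0$. The mechanism that actually works, and that the paper uses following \cite{Chemin}, is different: apply the vertical Bony decomposition, and in the dangerous paraproduct $T^v_{u^3}(\partial_3 w)$ write
\[
\Delta_q^v\!\bigl(S^v_{j-1}u^3\,\Delta^v_j\partial_3 w\bigr)
= S^v_q u^3\,\Delta^v_q\partial_3 w
+\bigl[\Delta^v_q,\,S^v_{j-1}u^3\bigr]\Delta^v_j\partial_3 w
+(S^v_q-S^v_{j-1})u^3\,\Delta^v_j\partial_3 w .
\]
The first piece is integrated by parts in $x_3$ against $\Delta^v_q w$, producing $\partial_3 u^3=-\div_h u^h$; the commutator is handled by the anisotropic commutator lemma (Lemma~\ref{commutator.lemma}), which again turns the vertical derivative of $u^3$ into $\nabla_h u$; the last piece lives on finitely many vertical blocks and is harmless. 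So it is the \emph{structure} $\langle u^3\partial_3 f,\,f\rangle$ together with commutator localization that provides the cancellation, not an integral representation of $u^3$.

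Your integral/Bernstein heuristic \emph{is} the right tool for the other two vertical terms $w^3\partial_3 v$ and $w^3\partial_3\eta$: there one needs extra vertical regularity on $w^3$, and indeed $\|w^3\|_{0,s}\le \|w^3\|_{0,s-1}+\|\nabla_h w\|_{0,s-1}$ via $\partial_3 w^3=-\div_h w^h$. But for the transport terms above you must invoke the Bony/commutator machinery; once you do, the rest of your plan goes through exactly as in the paper.
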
   

\noindent As an interesting consequence, we can improve the results of global well-posedness in the case of axisymmetric initial data established in \cite{Miao}. \\ 

\noindent Let us first recall some basic notions: We say that a vector field $u$ is axisymmetric if it satisfies 
$$\mathcal{R}_{-\alpha}(u(\mathcal{R}_{\alpha}(x))) = u(x), \quad \forall \alpha\in[0,2\pi], \ \forall x \in \mathbb R^3,$$
where $\mathcal{R}_{\alpha}$ denotes the rotation of axis $(Oz)$ and with angle $\alpha$. Moreover, an axisymmetric 
vector field $u$ is called without swirl if it has the form: 
$$u(x)=u^r(r,z)e_r + u^z(r,z)e_z, \quad x=(x_1,x_2,x_3),\quad r=\sqrt{x_1^2+x_2^2} \text{ and } z=x_3.$$
We say that a scalar function $f$ is axisymmetric, if the vector field $x\mapsto f(x)e_z$ is axisymmetric. We also denote by $\omega=\curl{u}$ the vorticity of $u$. Then we will prove: 

\begin{theo}{\textbf{(Global well-posedness)}} \label{th2}\\
Let $u_0\in H^1(\mathbb{R}^3)$ be an axisymmetric divergence free vector field without swirl such that $\frac{\omega_0}{r}\in L^2$ and let $\rho_0\in L^2$ be an axisymmetric function. Then there exists a unique global solution $(u,\rho)$ of the system \eqref{NSB_h}. Moreover, we have:  
\begin{align*}
u &\in \mathcal{C}(\mathbb{R}_+;H^1)\cap L^2_{loc}(\mathbb{R}_+;H^{1,1}\cap H^{2,0}), \quad 
\frac{\omega}{r} \in L^{\infty}_{loc}(\mathbb{R}_+;L^2)\cap L^2_{loc}(\mathbb{R}_+;H^{1,0}), \\
\rho &\in \mathcal{C}(\mathbb{R}_+;L^2)\cap L^2_{loc}(\mathbb{R}_+;H^{1,0}).
\end{align*}
\end{theo}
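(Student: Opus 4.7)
\medskip
\noindent\textbf{Proof proposal for Theorem \ref{th2}.}

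The plan is to split the statement into existence and uniqueness, handling the latter essentially for free via Theorem \ref{th1}. Indeed, take $s=1$: the announced regularity yields $u\in L^{\infty}_{loc}(H^{0,1})\cap L^{2}_{loc}(H^{1,1})$ (since $H^{1}\hookrightarrow H^{0,1}$ and the $L^{2}_{loc}(H^{1,1})$ bound is part of the conclusion) and $\rho\in L^{\infty}_{loc}(H^{0,0})\cap L^{2}_{loc}(H^{1,0})$ (with $1-s=0$). So any two solutions with the stated regularity fall in the class covered by Theorem~\ref{th1} and must coincide. The whole work is therefore to build \emph{one} solution realising that regularity.

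For the existence part I would follow the approach of \cite{Miao}, now simplified by the fact that we no longer need the more restrictive regularity that was imposed there only for uniqueness reasons. Concretely, I would regularise the system by a Friedrichs truncation (or a Galerkin scheme) producing smooth axisymmetric-without-swirl approximations $(u_{n},\rho_{n})$, then derive $n$-uniform a priori bounds in four steps. \textbf{(i)} Basic $L^{2}$ energy balances give $\|\rho_{n}(t)\|_{L^{2}}\le \|\rho_{0}\|_{L^{2}}$ together with $\|\rho_{n}\|_{L^{2}_{t}H^{1,0}}$ controlled, and, after absorbing the forcing $\rho e_{3}$ by Young's inequality, $u_{n}\in L^{\infty}_{loc}(L^{2})\cap L^{2}_{loc}(H^{1,0})$. \textbf{(ii)} Using the axisymmetric-without-swirl structure, the vorticity reduces to $\omega_{n}=\omega_{n}^{\theta}e_{\theta}$ and $\Omega_{n}:=\omega_{n}^{\theta}/r$ satisfies a transport–diffusion equation
\[
\partial_{t}\Omega_{n}+(u_{n}\!\cdot\!\nabla)\Omega_{n}-\Delta_{h}\Omega_{n}-\tfrac{2}{r}\partial_{r}\Omega_{n}=-\tfrac{\partial_{r}\rho_{n}}{r},
\]
in which the usual stretching term disappears thanks to the division by $r$. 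An $L^{2}(r\,drdz)$ energy estimate gives control of $\|\Omega_{n}\|_{L^{\infty}_{t}L^{2}}+\|\nabla_{h}\Omega_{n}\|_{L^{2}_{t}L^{2}}$ by the data and the quantities of step (i). \textbf{(iii)} From $\Omega_{n}\in L^{\infty}(L^{2})\cap L^{2}(H^{1,0})$, the axisymmetric Biot–Savart law recovers $u_{n}\in L^{\infty}(H^{1})\cap L^{2}(H^{1,1}\cap H^{2,0})$. \textbf{(iv)} Pass to the limit by standard compactness (Aubin–Lions in the horizontal directions and weak-$\ast$ compactness in the vertical one) to obtain a solution $(u,\rho)$ with the regularity announced in the theorem.

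The main obstacle, as usual for axisymmetric Boussinesq systems, is step (ii): absorbing the source $-\partial_{r}\rho_{n}/r$ in the $\Omega_{n}$-equation. The factor $1/r$ is not obviously controlled, since $\rho_{n}$ lives a priori only in $L^{\infty}(L^{2})\cap L^{2}(H^{1,0})$. I plan to handle this either by moving the $1/r$ onto $\Omega_{n}$ via integration by parts in $r$ (the boundary term at $r=0$ vanishes because axisymmetric fields without swirl satisfy $\Omega_{n}\in L^{2}(r\,dr)$ with the right decay), or by invoking a Hardy-type inequality in the radial variable to bound $\|\rho_{n}/r\|_{L^{2}}\lesssim \|\partial_{r}\rho_{n}\|_{L^{2}}$ for axisymmetric functions, which is exactly what $\rho_{n}\in L^{2}(H^{1,0})$ provides. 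Once this step is closed, step (iii) is a routine consequence of the Biot–Savart representation together with the horizontal-dissipation estimates, step (iv) is standard, and Theorem \ref{th1} with $s=1$ delivers uniqueness in the resulting class, concluding the proof of global well-posedness.
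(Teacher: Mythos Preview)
Your proposal follows essentially the same route as the paper: uniqueness via Theorem~\ref{th1} with $s=1$, existence via a Friedrichs approximation, $L^2$ energy bounds, an $H^1$-level a priori estimate exploiting the axisymmetric-without-swirl structure, and an Aubin--Lions compactness argument to pass to the limit. The only presentational difference is that where you sketch the vorticity argument for $\Omega=\omega^{\theta}/r$ (your step~(ii)) and then recover the $H^1$ bound on $u$ by Biot--Savart (your step~(iii)), the paper simply invokes Proposition~3.2 of \cite{Miao} as a black box yielding
\[
\|u_n(t)\|_{H^1}^2+\int_0^t\|\nabla_h u_n(\tau)\|_{H^1}^2\,d\tau\le C_0e^{C_0t},
\]
and then spends its effort on the anisotropic embedding \eqref{injection} needed to bound the nonlinear terms and $\partial_t u_n,\partial_t\rho_n$ in $L^2_TH^{-1}$ for Aubin's theorem. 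You should add this embedding step to your step~(iv); it is not entirely routine in the anisotropic setting.

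One caution on your step~(ii): the Hardy-type inequality you propose, $\|\rho/r\|_{L^2(\mathbb{R}^3)}\lesssim\|\partial_r\rho\|_{L^2(\mathbb{R}^3)}$, is exactly the borderline two-dimensional radial Hardy inequality (weight $r^{-1}\,dr$ on the left, $r\,dr$ on the right) and \emph{fails} in general. So you cannot close the $\Omega$-estimate this way with only $\rho_0\in L^2$. The argument in \cite{Miao} that underlies their Proposition~3.2 handles the source term $-\partial_r\rho/r$ by a more careful manipulation (integration by parts together with the extra good term $\tfrac{2}{r}\partial_r\Omega$ and the control of $\nabla_h\rho$ coming from the horizontal dissipation in the $\rho$-equation). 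Your first alternative --- moving the derivative onto $\Omega$ by integration by parts --- is the right instinct; just be aware that it requires using the dissipative structure on both sides rather than a bare Hardy bound.
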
 

\noindent Note that in Theorem \ref{th2}, we only assume that $(u_0,\rho_0)\in H^1\times L^2$ whereas in \cite{Miao} the authors consider a stronger condition. Namely, in addition of the hypothesis of Theorem \ref{th2} they assume that $(\nabla \times u_0,\rho_0)$ is in $H^{0,1}\times H^{0,1}$ or in $L^{\infty} \times H^{0,1}$. 
In both works the key point consists to establish an uniqueness result: it is the Theorem \ref{th1} for us, whereas in \cite{Miao} the authors assume a strong initial condition in order to obtain some double exponential control in time for the gradient of $u$. \\

\noindent The paper is organized as follows: in section 2, for the reader's convenience, we recall the required background concerning the functional spaces and some useful technical tools. In section 3, we establish several a priori estimates which are then used in section 4 to prove the two theorems above. Finally, in Appendix we shall prove a result of well posedness for \eqref{NSB_h} under some smallness conditions involving only $T$, the $L^2$-norm of $\rho_0$ and the $H^{0,s}$-norm of $u_0$, the uniqueness part of this result is a consequence of Theorem 1.

\section{Functional framework}
\subsection{Notations and functional spaces}

\noindent Throughout this paper we write $\mathbb R^3=\mathbb R^2_h\times\mathbb R_v$ and for any vector $\xi=(\xi_1,\xi_2,\xi_3)\in \mathbb R^3$, we will denote the two first components by $\xi_h$ and the last one by $\xi_v$, that is to say: $\xi=(\xi_1,\xi_2,\xi_3)\bydef (\xi_h,\xi_v)$. Similarly, for any vector field $X=(X^1,X^2,X^3)$ we will write $X=(X^h,X^v)$ with the meaning that $X^h=(X^1,X^2)$ and $X^v=X^3$. \\ 

\noindent We will also use the notations: 
$$
H_h^s=H^s(\mathbb R^2_h),\ H_v^s=H^s(\mathbb R_v),\ L_v^p(H_h^s) = L^p(\mathbb R_v;H_h^s) \text{ and }
L_T^rL_h^pL_v^q = L^r(0,T;L^p(\mathbb R^2_h;L^q(\mathbb R_v))).
$$   
 

\noindent Recall that \eqref{NSB_h} involves the operator $-\Delta_h$ which only regularizes along the horizontal direction. Hence, the regularity along the vertical variable must be measured differently than the horizontal ones, which motivates the consideration of anisotropic functional spaces. We now provide the definition of these spaces whose are based on an anisotropic version of the Littlewood-Paley theory (see \cite{Chemin1, Haroune, Ifti1, Ifti2} for more details and more examples of applications). \\  

\noindent Let $(\psi,\varphi)$ be a couple of smooth functions with value in $[0,1]$ satisfying: 
\begin{alignat*}{2}
&\text{Supp } \psi \subset \{ \xi \in \mathbb{R} : |\xi| \leq \frac{4}{3}\}, 
\quad &&\text{Supp } \varphi \subset \{\xi \in \mathbb{R} :\frac{3}{4} \leq |\xi| \leq \frac{8}{3} \}, \\
&\psi(\xi) + \sum_{q\in \mathbb{N}} \varphi(2^{-q}\xi) = 1 \;\; \forall \xi \in \mathbb{R}, 
\quad 
&&\sum_{q\in \mathbb{Z}} \varphi(2^{-q}\xi) = 1 \;\; \forall \xi \in \mathbb{R}\backslash \{0\}.
\end{alignat*}
Let $a$ be a tempered distribution, $\hat{a}=\mathcal{F}(a)$ its Fourier transform and $\mathcal{F}^{-1}$ 
denotes the inverse of $\mathcal{F}$. We define the non-homogeneous dyadic blocks $\Delta_q$ and the homogeneous ones $\dot{\Delta}_q$ by setting:
\begin{alignat*}{2}
&\Delta^v_{q} a \bydef \ \left\{ \begin{array}{l} \mathcal{F}^{-1}\big(\varphi (2^{-q}|\xi_3| \hat{a} ) \big)\;\text{for} \; q\in \mathbb{N}, \\ 
\mathcal{F}^{-1}\big(\psi (|\xi_3| \hat{a} ) \big) \; \text{for} \; q=-1, \\ 
0 \; \text{for} \; q\leq-2, \end{array} \right. \quad 
&&\Delta^h_j a \bydef \ \left\{ \begin{array}{l} \mathcal{F}^{-1}\big(\varphi (2^{-j}|\xi_h| \hat{a} ) \big)\; \text{for} \; j\in \mathbb{N}, \\ 
\mathcal{F}^{-1}\big(\psi (|\xi_h| \hat{a} ) \big) \; \text{for} \; j=-1, \\ 
0 \; \text{for} \; j\leq-2, \end{array} \right. \\
&S_q^v \bydef \sum_{m<q}\Delta_{m}^v,  \quad \forall q \in \mathbb Z, \quad 
&&S_j^h \bydef \sum_{m<j}\Delta_{m}^h,  \quad \forall j \in \mathbb Z, \\ 
&\dot{\Delta}^v_q a \bydef \mathcal{F}^{-1}\big(\varphi (2^{-q}|\xi_3| \hat{a} ) \big),\; \forall \; q\in \mathbb{Z}, 
\quad &&\dot{\Delta}^h_j a \bydef \mathcal{F}^{-1}\big(\varphi (2^{-j}|\xi_h| \hat{a} ) \big),\; \forall \; j\in \mathbb{Z}, \\
&\dot{S}_q^v \bydef \sum_{m<q}\dot{\Delta}_{m}^v ,  \quad \forall q \in \mathbb Z, \quad 
&&\dot{S}_j^h \bydef \sum_{m<q}\dot{\Delta}_{j}^h ,  \quad \forall j \in \mathbb Z.
\end{alignat*}
We then have $a = \sum_{m\geq -1} \Delta_m a =\sum_{m\in \mathbb{Z}} \dot{\Delta}_m a$ for both horizontal and vertical decomposition. Moreover, in all the situations, i.e. for 
$\Delta,S$ with the same index of direction (horizontal or vertical) and in both homogeneous and non-homogeneous cases they hold:  
\begin{align*}
&\Delta_m\Delta_{m'} a =0, \; \text{ if} \; |m-m'| \geq 2, \\
&\Delta_m\big(S_{m'-1}a\Delta_{m'} a\big) =0, \; \text{ if} \; |m-m'| \geq 5, \\
&\Delta_m\sum_{i\in \{0,1,-1 \}}\sum_{m'\in \mathbb{Z}}(\Delta_{m'+i}a\Delta_{m'} a\big)=\Delta_m\sum_{i\in \{0,1,-1 \}}\sum_{m'\geq m-N_0}(\Delta_{m'+i}a\Delta_{m'} a\big),
\end{align*}
where $N_0\in \mathbb N$ can be chosen independently of $a$ (we can take $N_0=5$). \\ 
 
\noindent In what follows, we will use the so-called Bony's decomposition (see \cite{Chemin1}): 
\begin{alignat*}{2}
&ab= T_a(b) + T_b(a) + R(a,b), \quad &&\\
&T_a(b)\bydef \sum_{q\in Z} S_{q-1} a \Delta_q b, \quad 
&&R(a,b)\bydef \sum_{i\in \{0,1,-1 \}}\sum_{q\in Z} \Delta_{q+i} a \Delta_q b.
\end{alignat*}
Here again all the situations may be considered however particular cases must be precised by using the adequate 
notations. For instance, if we consider the non-homogeneous version for the vertical variable, we have to 
add the exponent $^{v}$ in all the operators $T_a,T_b,R,S_q$ and $\Delta_q$. \\     

\noindent Our analysis will be made in the context of the non-homogeneous and anisotropic Sobolev and Besov spaces:
\begin{defin}
Let $s,t$ be two real numbers and let $p,q_1,q_2$ be in $[1,+\infty]$, we define the space $(B^t_{p,q_1})_h(B^s_{p,q_2})_v$ as the space of tempered distributions $u$ such that 
$$
\norm u _{(B^t_{p,q_1})_h(B^s_{p,q_2})_v}\bydef \norm { 2^{kt}2^{js} \norm {\Delta_k^h \Delta_j^v u}_{L^p}}_{\mathscr{l} \ell _k^{q_1}(\mathbb{Z};\ell_j^{q_2}(\mathbb{Z})) } < \infty .
$$
In the situation where $q_1=q_2=q$, we use the notation $B_{p,q}^{t,s}\bydef(B^t_{p,q})_h(B^s_{p,q})_v$. If $p=q=2$ then this last space is denoted by $H^{t,s}$. If moreover, $t=0$ then we have:
$$
\norm u _{H^{0,s}} \approx \big(\sum_{j\in \mathbb{Z}} 2^{2js} \norm { \Delta_j^v u}_{L^2}^{2}\big)^{\frac{1}{2}}.
$$
\end{defin}
\noindent Let $f\in B_{p,2}^{0,s}$, the following properties will be of constant use in the paper: 
\begin{align}
 &\forall s\in\mathbb R, \ \exists c_q=c_q(f): \; \left\|\Delta_q^v f  \right\|_{L^p}\leq c_q 2^{-sq} \left\|f \right\|_{B_{p,2}^{0,s}},\ \text{and} \,\displaystyle\sum_{q\geq -1}c_q^2 \leq 1, \label{stat} \\
&\forall s<0, \ \exists \tilde{c}_q=\tilde{c}_q(f): \; \left\|S_q^v f  \right\|_{L^p}\leq \tilde{c}_q 2^{-sq} \left\|f \right\|_{B_{p,2}^{0,s}},\ \text{and} \,\displaystyle\sum_{q\geq -1}\tilde{c}_q^2 \leq 1. \label{stat2}
\end{align}
\subsection{Some useful Lemmata}
Other properties of the spaces defined in the previous subsection can be found in \cite{Chemin1} for the usual isotropic version, and in \cite{Chemin2} for the anisotropic case. A very helpful tool related to the Bernstein Lemma (see for instance Lemma 2.1 in \cite{Chemin2}) is given by the following Lemma.
\begin{lemma} \label{ber}
Let $\mathcal{B}_h$ (resp. $\mathcal{B}_v$) be a ball of $\mathbb{R}^2_h$ (resp. $\mathbb{R}_v$) and $\mathcal{C}_h$ (resp. $\mathcal{C}_v$) a ring of $\mathbb{R}^2_h$ (resp. $\mathbb{R}_v$). Let also $a$ be a tempered distribution and $\hat{a}$ its Fourier transform. Then for $1\leq p_2\leq p_1 \leq \infty$ and $1\leq q_2\leq q_1\leq \infty$ we have:
\begin{align*}
&\text{Supp }\hat{a} \subset 2^k\mathcal{B}_h \ \Longrightarrow, \ 
\norm {\partial^{\alpha}_{x_h}a}_{L^{p_1}_h(L^{q_1}_v)} \lesssim 2^{k\big( |\alpha| + 2\big( \frac{1}{p_2}- \frac{1}{p_1}\big) \big)} \norm {a}_{L^{p_2}_h(L^{q_1}_v)}, \\ 
&\text{Supp }\hat{a} \subset 2^l\mathcal{B}_v \ \Longrightarrow \  
\norm {\partial^{\beta}_{x_3}a}_{L^{p_1}_h(L^{q_1}_v)} \lesssim 2^{l\big( \beta + \big( \frac{1}{q_2}- \frac{1}{q_1}\big) \big)} \norm {a}_{L^{p_1}_h(L^{q_2}_v)}, \\
&\text{Supp }\hat{a} \subset 2^k\mathcal{C}_h \ \Longrightarrow \
\norm {a}_{L^{p_1}_h(L^{q_1}_v)} \lesssim 2^{-kN} \sup_{|\alpha|=N}\norm {\partial^{\alpha}_{x_h}a}_{L^{p_1}_h(L^{q_1}_v)}, \\
&\text{Supp }\hat{a} \subset 2^l\mathcal{C}_v \ \Longrightarrow \
\norm {a}_{L^{p_1}_h(L^{q_1}_v)} \lesssim 2^{-lN} \norm {\partial^{N}_{x_3}a}_{L^{p_1}_h(L^{q_1}_v)}.
\end{align*}
\end{lemma}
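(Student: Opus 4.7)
The plan is to reduce the anisotropic Bernstein inequalities to their one-dimensional isotropic analogues, applied alternately in the horizontal and vertical directions, exploiting the mixed-norm structure of $L^{p_1}_h(L^{q_1}_v)$ via Fubini and Minkowski's integral inequality.

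For the ball-type estimates (first two inequalities), I would pick a smooth cut-off $\chi\in C_c^\infty$ supported in a slightly enlarged copy of $\mathcal B_h$ (resp.\ $\mathcal B_v$) and equal to $1$ on the original ball. Since $\hat a$ is Fourier-supported in $2^k\mathcal B_h$, the identity $\hat a(\xi)=\chi(2^{-k}\xi_h)\hat a(\xi)$ converts $\partial^\alpha_{x_h}a$ into a convolution in the horizontal variable only:
$$
\partial^\alpha_{x_h}a(x_h,x_v)\;=\;2^{k(|\alpha|+2)}\bigl(h_\alpha(2^k\cdot)\ast_h a(\cdot,x_v)\bigr)(x_h),
$$
where $h_\alpha:=\mathcal F_h^{-1}\bigl((i\xi_h)^\alpha\chi(\xi_h)\bigr)$ is Schwartz. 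I then apply Young's inequality in $x_h$ with the vertical variable frozen; rescaling of the kernel produces the factor $2^{k(|\alpha|+2(1/p_2-1/p_1))}$, and taking the $L^{q_1}_v$ norm of the resulting pointwise-in-$x_v$ inequality closes the first estimate. The second inequality is entirely symmetric with horizontal and vertical roles swapped, relying on the fact that the vertical Schwartz kernel convolves in $x_v$ while the $L^{p_1}_h$ norm is preserved.

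For the ring-type estimates (third and fourth inequalities), I would pick a cut-off $\chi$ now supported in a slightly fattened ring disjoint from the origin and identically $1$ on $\mathcal C_h$ (resp.\ $\mathcal C_v$). Because the support avoids $0$, by a standard factorisation argument I can write $\chi(\xi_h)=\sum_{|\alpha|=N}\xi_h^\alpha\,g_\alpha(\xi_h)$ with each $g_\alpha\in C_c^\infty(\mathbb R^2\setminus\{0\})$; the key observation is that $\chi/|\xi_h|^{2N}$ is a smooth compactly supported multiplier since $|\xi_h|$ is bounded below on the support of $\chi$. This lets me recover $a$ from its horizontal derivatives of order $N$ at the cost of a factor $2^{-kN}$:
$$
a(x_h,x_v)\;=\;2^{-kN}\sum_{|\alpha|=N}c_\alpha\,2^{2k}\bigl(\widetilde g_\alpha(2^k\cdot)\ast_h\partial^\alpha_{x_h}a\bigr)(x_h,x_v),
$$
and a final Young's inequality (with matching integrability indices on both sides) in the horizontal variable yields the bound after taking a supremum over $|\alpha|=N$. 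The vertical ring estimate is the same argument in one dimension, which is why the stated inequality does not carry a sum: in $\mathbb R_v$ one simply uses $\chi(\xi_v)=\xi_v^N g_N(\xi_v)$.

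The main technical obstacle is the mixed-norm bookkeeping when invoking Young's inequality: the convolution kernel acts on only one of the two variables, so Minkowski's integral inequality must be used to pull the outer norm inside before the Young estimate can be applied. Once this two-step procedure (Minkowski in the frozen variable, Young in the convolution variable) is done cleanly, the scaling exponents $2(1/p_2-1/p_1)$ and $1/q_2-1/q_1$ fall out automatically from the dilation of the Schwartz kernels in $\mathbb R^2$ and $\mathbb R$ respectively, and the four inequalities all emerge from a single template differing only in which variable carries the rescaling.
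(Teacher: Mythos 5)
The paper does not actually prove this lemma itself --- it simply cites Lemma 2.1 of \cite{Hmi1} --- and your argument is precisely the standard proof used there and in \cite{Chemin1}: a rescaled compactly supported Fourier multiplier acting by convolution in a single variable, with the ring case handled by dividing the cut-off by $|\xi|^{2N}$ (resp.\ $\xi_3^N$). Your proof is correct, including the one genuinely delicate point, namely that since the outer norm is $L^{p_1}_h$ and the inner norm is $L^{q_1}_v$ one must first apply Minkowski's integral inequality in the frozen (vertical) variable to bring $\left\Vert\cdot\right\Vert_{L^{q_1}_v}$ inside the horizontal convolution integral, and only then apply Young's inequality in $x_h$.
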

\noindent We will also need some product rules in (non-homogeneous) Sobolev spaces, which we prove it here.
\begin{lemma}\label{productrule1}
Let $\sigma,\sigma',s,s_0 \in \mathbb{R}$ verifying $\sigma,\sigma'<1,\ \sigma+\sigma'> 0,\ s_0>\frac{1}{2}, s\leq s_0$ and $s+s_0\geq0$ then 
there exists a constant $C=C(\sigma,\sigma',s,s_0)$ such that:
$$
\norm {ab}_{H^{\sigma+\sigma'-1,s}}\leq C \norm {a}_{H^{\sigma,s}} \norm {b}_{H^{\sigma',s_0}}, \quad \forall a,b \in \mathcal{S}.
$$
\end{lemma}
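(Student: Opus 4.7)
The plan is to apply a vertical Bony decomposition
$$
ab = T^v_a b + T^v_b a + R^v(a,b)
$$
and estimate each piece in $H^{\sigma+\sigma'-1,s}$ by combining the classical two-dimensional horizontal product law
$$
\|fg\|_{H^{\sigma+\sigma'-1}_h}\lesssim\|f\|_{H^\sigma_h}\|g\|_{H^{\sigma'}_h}\qquad(\sigma,\sigma'<1,\ \sigma+\sigma'>0),
$$
applied pointwise in $v$, with vertical Littlewood--Paley estimates. The central vertical ingredient is that, since $s_0>1/2$, vertical Bernstein combined with \eqref{stat} yields the summable control
$$
\|\Delta^v_{q'}b\|_{L^\infty_v(H^{\sigma'}_h)}\lesssim c_{q'}\,2^{q'(1/2-s_0)}\,\|b\|_{H^{\sigma',s_0}},
$$
and in particular the uniform bound $\|S^v_{q'-1}b\|_{L^\infty_v(H^{\sigma'}_h)}\lesssim\|b\|_{H^{\sigma',s_0}}$.

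The twin paraproduct $T^v_b a$ is the easiest: the horizontal product rule plus H\"older in $v$ give
$$
\|\Delta^v_q(T^v_b a)\|_{L^2_v(H^{\sigma+\sigma'-1}_h)}\lesssim\sum_{|q'-q|\le N_0}\|S^v_{q'-1}b\|_{L^\infty_v(H^{\sigma'}_h)}\,\|\Delta^v_{q'}a\|_{L^2_v(H^\sigma_h)},
$$
and the uniform bound above together with \eqref{stat} (or \eqref{stat2} if $s<0$) produces the required $c_q\,2^{-sq}\,\|a\|_{H^{\sigma,s}}\|b\|_{H^{\sigma',s_0}}$ decay. For $T^v_a b$ I would place $S^v_{q'-1}a$ in $L^\infty_v(H^\sigma_h)$ via vertical Bernstein (losing at most a factor $2^{q'(1/2-s)^+}$) and $\Delta^v_{q'}b$ in $L^2_v(H^{\sigma'}_h)$; after multiplication by $2^{sq}$ with $q\approx q'$, the product simplifies to the sequence $c_{q'}\,2^{q(1/2-s_0)}$, which is $\ell^2(q)$-summable precisely because $s_0>1/2$, with the hypothesis $s\le s_0$ ensuring that no overshoot occurs at high frequencies.

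The main obstacle is the remainder $R^v(a,b)=\sum_{i,q'}\Delta^v_{q'+i}a\,\Delta^v_{q'}b$, because there one only has $q'\ge q-N_0$ so $q$ can be much smaller than $q'$, and a H\"older placement of either factor in $L^\infty_v$ would not give enough decay. To compensate, I would apply vertical Bernstein on the outer $\Delta^v_q$ (from $L^1_v$ to $L^2_v$, gaining $2^{q/2}$) followed by Cauchy--Schwarz in $v$:
$$
\|\Delta^v_q(\Delta^v_{q'+i}a\,\Delta^v_{q'}b)\|_{L^2_v(H^{\sigma+\sigma'-1}_h)}\lesssim 2^{q/2}\,c_{q'+i}c_{q'}\,2^{-(s+s_0)q'}\,\|a\|_{H^{\sigma,s}}\|b\|_{H^{\sigma',s_0}}.
$$
Multiplying by $2^{sq}$, squaring and summing reduces to controlling $\sum_q 2^{q(1-2s_0)}\bigl(\sum_{q'\ge q-N_0}c_{q'+i}c_{q'}\,2^{-(s+s_0)q'}\bigr)^2$; for $s+s_0>0$ a Cauchy--Schwarz in $q'$ (using $\sum (c_{q'+i}c_{q'})^2\le 1$) bounds the inner squared sum by $C\,2^{-2(s+s_0)q}$, so the remaining geometric factor in $q$ is $2^{q(1-2s-4s_0)}$, which is summable since $s+2s_0\ge s_0>1/2$. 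The endpoint $s+s_0=0$ is the most delicate: there the inner sum is bounded uniformly in $q$ by just $\sum c_{q'}^2\le 1$, and summability is restored purely by the prefactor $2^{q(1-2s_0)}$, using $s_0>1/2$ one more time. This tight balance between $s+s_0\ge 0$ and $s_0>1/2$ is the single step requiring genuine care, and is where all three hypotheses of the lemma come together.
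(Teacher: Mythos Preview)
Your approach is essentially the paper's: vertical Bony decomposition, the two-dimensional horizontal product law applied fibrewise, and vertical Bernstein on $\Delta^v_q$ for the remainder, with separate handling of the endpoint $s+s_0=0$. Two small points to tighten: for $T^v_a b$ at $s=1/2$ the loss on $\|S^v_{q'-1}a\|_{L^\infty_v}$ is $\sqrt{q'}$ rather than $2^{q'(1/2-s)^+}=1$ (the paper treats this case separately), though the strict inequality $s_0>1/2$ still absorbs it; and in the remainder your prefactor after squaring should read $2^{(2s+1)q}$, so that after your Cauchy--Schwarz bound $2^{-2(s+s_0)q}$ the surviving factor is simply $2^{(1-2s_0)q}$, which is the summable quantity actually used in the paper.
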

\begin{proof}
Remark first that because $\norm{ab}_{H^{\sigma+\sigma'-1,s}}=\norm{\norm{ab}_{H^s_v}}_{H^{\sigma+\sigma'-1}_h}$,
we have only to prove that:
\begin{equation}\label{productdim1}
H^s(\mathbb{R}) \cdot H^{s_0}(\mathbb{R}) \subset H^{s}(\mathbb{R}).
\end{equation}
Indeed, by using (\ref{productdim1}) together with the usual product rules with respect to the horizontal variables, the 
desired result follows (see for instance \cite{Chemin1}). Note also that when $s_0>s>\frac{1}{2}$, the inclusion (\ref{productdim1}) is trivial since in this case the space $H^{s}$ is an algebra and clearly $H^{s_0}\hookrightarrow H^{s}$.\\
It remains then only to prove (\ref{productdim1}) in the situation $s_0>\frac{1}{2}\geq s$ and $s+s_0\geq 0$. In order to do this, we use the Bony's decomposition in the vertical variable: $ab= T_a^vb + T_b^va + R^v(a,b)$. \\ 
For the first term, let us consider the two cases: $s<\frac{1}{2}$ and $s=\frac{1}{2}$.\\
\textit{\textbf{The case}} $s<\frac{1}{2}$: By using the embedding $H^s(\mathbb{R})\hookrightarrow B^{s-\frac{1}{2}}_{\infty,2}(\mathbb{R})$, together with \eqref{stat2} we obtain for any $q\geq -1$: 
\begin{align*}
\norm {\Delta^v_q( T_a^vb)}_{L^2(\mathbb{R})}&\lesssim \norm {S^v_{q-1}a}_{L^{\infty}(\mathbb{R})} \norm {\Delta_q^v b}_{(L^2\mathbb{R})} \lesssim c_q^2 2^{-q(s-\frac{1}{2})} 2 ^{-qs_0} \norm a_{B^{s-\frac{1}{2}}_{\infty,2}(\mathbb{R})}\norm b_{H^{s_0}(\mathbb{R})} \\
&\lesssim c_q^2 \max\{ 1, 2^{s_0-\frac{1}{2}} \} 2 ^{-qs} \norm a_{H^s(\mathbb{R})}\norm b_{H^{s_0}(\mathbb{R})} \lesssim c_q^2  2 ^{-qs} \norm a_{H^s(\mathbb{R})}\norm b_{H^{s_0}(\mathbb{R})}.
\end{align*}
It follows that 
\begin{equation}\label{T_ab}
\norm{ T_a^vb}_{H^s(\mathbb{R})} \lesssim \norm a_{H^s(\mathbb{R})}\norm b_{H^{s_0}(\mathbb{R})}.
\end{equation} 
\textit{\textbf{The case}} $s=\frac{1}{2}$. We use the following estimate:
$$
\norm {S^v_{q-1}a}_{L^{\infty}(\mathbb{R})}\leq \sum_{-1\leq j \leq q} 2^{\frac{j}{2}} \norm {\Delta_j^va}_{L^{2}(\mathbb{R})}\lesssim \sqrt{q} \norm a_{H^{\frac{1}{2}}(\mathbb{R})},
$$
in order to obtain:
\begin{equation*}
\norm {\Delta^v_q( T_a^vb)}_{L^2(\mathbb{R})}\lesssim \norm {S^v_{q-1}a}_{L^{\infty}(\mathbb{R})} \norm {\Delta_q^v b}_{L^2(\mathbb{R})} \lesssim c_q \sqrt{q} 2^{-q(s_0-\frac{1}{2})} 2 ^{-\frac{q}{2}} \norm a_{H^{\frac{1}{2}}(\mathbb{R})}\norm b_{H^{s_0}(\mathbb{R})}.
\end{equation*}
Seen that $\forall \varepsilon > 0$, there exists $C_{\varepsilon} >0$ such that for all $q\in \mathbb{R}^+$:
$\sqrt{q}2^{-q\varepsilon}\leq C_{\varepsilon}$, we infer that:
$$\norm { T_a^vb}_{H^{\frac{1}{2}}(\mathbb{R})} \lesssim \norm a_{H^{\frac{1}{2}}(\mathbb{R})}\norm b_{H^{s_0}(\mathbb{R})},$$
and \eqref{T_ab} follows for all $s\leq \frac{1}{2}< s_0$. 
Moreover, by using the embedding $H^{s_0}(\mathbb{R})\hookrightarrow L^{\infty}(\mathbb{R})$ together with the estimate:  
$$
\norm {\Delta^v_q( T_b^va)}_{L^2(\mathbb{R})}\lesssim \norm {S^v_{q-1}b}_{L^{\infty}(\mathbb{R})} \norm {\Delta_q^v a}_{L^2(\mathbb{R})} \lesssim  \norm b_{L^{\infty}(\mathbb{R})}c_q 2^{-qs} \norm a_{H^s(\mathbb{R})},
$$
we obtain:$$
\norm { T_b^va}_{H^s(\mathbb{R})} \lesssim \norm a_{H^s(\mathbb{R})}\norm b_{H^{s_0}(\mathbb{R})}.$$
For the reminder term, if $s+s_0>0$, then applying Lemma \ref{ber} together with \eqref{stat2} gives:   
\begin{align*}
\norm {\Delta^v_q( R(a,b))}_{L^2(\mathbb{R})} &\lesssim 2^{\frac{q}{2}} \sum_{j\geq q-N_0} \norm {\Delta_j^v a}_{L^2(\mathbb{R})}\norm {\tilde{\Delta}_j^v b}_{L^2(\mathbb{R})},  \\ 
&\lesssim 2^{\frac{q}{2}} \sum_{j\geq q-N_0}\big(c_j^2 2^{-j(s+s_0)} \big) \norm {a}_{H^s(\mathbb{R})}\norm {b}_{H^{s_0}(\mathbb{R})},
\end{align*}
where $$  \tilde{\Delta}_q^v\bydef \sum_{i=\{-1,0,1\}} \Delta_{q+i}^v.$$
Consequently, for any $q\geq -1$ we get:
\begin{align*}
2^{qs}\norm {\Delta^v_q( R(a,b))}_{L^2(\mathbb{R})} &\lesssim  2^{-q(s_0-\frac{1}{2})} \norm {a}_{H^s(\mathbb{R})}\norm {b}_{H^{s_0}(\mathbb{R})} \sum_{j\geq q-N_0}c_j^2 2^{-(j-q)(s+s_0)} \\
&\lesssim c_q 2^{-q(s_0-\frac{1}{2})} \norm {a}_{H^s(\mathbb{R})}\norm {b}_{H^{s_0}(\mathbb{R})}.
\end{align*}
It is then easy to show that $\norm { R(a,b)}_{H^s(\mathbb{R})} \lesssim \norm a_{H^s(\mathbb{R})}\norm b_{H^{s_0}(\mathbb{R})}$.\\
If $s+s_0=0$, then along the same lines we can prove that:
\begin{equation*}
\norm { R(a,b)}_{B^{-\frac{1}{2}}_{2,\infty}(\mathbb{R})} \lesssim \norm a_{H^s(\mathbb{R})}\norm b_{H^{s_0}(\mathbb{R})}.
\end{equation*}
The last step consists to use the following inequality by taking $a=-\frac{1}{2}$ and $\varepsilon=s_0-\frac{1}{2}$:
\begin{equation}\label{nonhomogeneous-embedding}
\norm f_{B^{a-\varepsilon}_{2,1}}=\sum_{k\geq -1} 2^{k(a-\varepsilon)} \norm {\Delta_k f}_{L^2} \leq C(\varepsilon) \norm f_{B^{a}_{2,\infty}}, \;\;  \forall a\in\mathbb{R},\varepsilon>0.
\end{equation}
We get:  
\begin{align*}
\norm { R(a,b)}_{H^{-s_0}(\mathbb{R})}&=\norm { R(a,b)}_{H^{s}(\mathbb{R})} \lesssim \norm a_{H^s(\mathbb{R})}\norm b_{H^{s_0}(\mathbb{R})},
\end{align*}
which ends the proof.
\end{proof}

\noindent Another important result is the following commutator-type estimate:
\begin{lemma}\label{commutator.lemma}
 Let $u,f$ be regular where $u$ is a divergence free vector field in $\mathbb{R}^3$. We have:
$$
\left\|\big[ \Delta_q^v, S_{j-1}^vu^3(.,x_3) \big]f \right\|_{L^2_{v}H^{-\frac{1}{2}}_h}\lesssim 2^{-q}\left\| S_{j-1}^v\nabla_h u(.,x_3) \right\|_{L^\infty_vL^{2}_h} \left\|f\right\|_{L^2_vH^{\frac{1}{2}}_h}.
$$
\end{lemma}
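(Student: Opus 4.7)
The plan is to represent the vertical Littlewood--Paley block as a one-dimensional convolution in $x_3$, Taylor-expand the coefficient $S_{j-1}^v u^3$ inside the commutator, and then exploit the divergence-free condition to exchange the resulting vertical derivative for horizontal ones.

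Write $a=S_{j-1}^v u^3$. Since $\Delta_q^v$ acts only on the vertical variable as a Fourier multiplier, it is convolution in $x_3$ against a kernel $h_q$ satisfying $h_q(z)=2^q h(2^q z)$ for some $h\in\mathcal S(\mathbb R)$ when $q\ge 0$ (the case $q=-1$ contributes a bounded constant that is absorbed in $\lesssim$). A direct computation yields
\begin{equation*}
[\Delta_q^v,a]f(x_h,x_3)=\int_{\mathbb R} h_q(x_3-y_3)\bigl(a(x_h,y_3)-a(x_h,x_3)\bigr) f(x_h,y_3)\,dy_3,
\end{equation*}
and a first-order Taylor expansion rewrites the bracket as $(y_3-x_3)\int_0^1 \partial_3 a(x_h,x_3+\tau(y_3-x_3))\,d\tau$. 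The crucial observation is that since $\div u=0$ and $S_{j-1}^v$ commutes with horizontal derivatives, $\partial_3 a=-S_{j-1}^v\div_h u^h$, whose pointwise modulus is bounded by $|S_{j-1}^v\nabla_h u|$.

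The horizontal product estimate I would use is the two-dimensional bound $\|g\phi\|_{H^{-1/2}_h}\lesssim\|g\|_{L^2_h}\|\phi\|_{H^{1/2}_h}$, obtained by duality from $H^{1/2}(\mathbb R^2)\hookrightarrow L^4(\mathbb R^2)$ and Hölder. Applying it with $g=\partial_3 a(\cdot,x_3+\tau(y_3-x_3))$ and $\phi=f(\cdot,y_3)$, and taking an $L^\infty_v$-supremum in the internal vertical argument of $g$, I get
\begin{equation*}
\|[\Delta_q^v,a]f(\cdot,x_3)\|_{H^{-1/2}_h}\lesssim \|S_{j-1}^v\nabla_h u\|_{L^\infty_v L^2_h}\int_{\mathbb R}|h_q(x_3-y_3)|\,|y_3-x_3|\,\|f(\cdot,y_3)\|_{H^{1/2}_h}\,dy_3.
\end{equation*}
The remaining integral is a convolution in $x_3$ against the kernel $|z|h_q(z)$. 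Young's inequality in $L^2_v$, combined with the scaling identity $\||z|h_q(z)\|_{L^1_v}=2^{-q}\||y|h(y)\|_{L^1}$, delivers the claimed factor $2^{-q}$ and completes the proof.

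The main obstacle is deploying $\div u=0$ at the right moment: the naive Taylor expansion leaves $\partial_3 u^3$ behind, which cannot be controlled by $\|\nabla_h u\|_{L^\infty_v L^2_h}$ in the anisotropic setting; rewriting it as $-\div_h u^h$ is what makes the estimate close. The other ingredients — the one-dimensional kernel representation of $\Delta_q^v$, the planar embedding $H^{1/2}_h\hookrightarrow L^4_h$, and Young's inequality — are standard, and the commutator gain of $2^{-q}$ comes from pairing the Taylor factor $|y_3-x_3|$ with the scaling of $h_q$.
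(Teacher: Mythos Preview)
Your proof is correct and follows essentially the same approach the paper indicates: the paper's ``proof'' is only a sketch, pointing to the divergence-free identity $\partial_3 u^3=-\nabla_h\!\cdot u^h$ and the standard one-dimensional commutator estimate $\|[\Delta_j,a]b\|_{L^r}\lesssim 2^{-j}\|\nabla a\|_{L^p}\|b\|_{L^q}$, and then deferring all details to \cite{Chemin}. What you have written is precisely the unpacking of that argument in the anisotropic setting --- kernel representation of $\Delta_q^v$, Taylor expansion, the divergence-free swap $\partial_3 a\to -S_{j-1}^v\operatorname{div}_h u^h$, the planar product rule $\|g\phi\|_{H^{-1/2}_h}\lesssim\|g\|_{L^2_h}\|\phi\|_{H^{1/2}_h}$, and Young's inequality with $\| |z|h_q\|_{L^1}=C2^{-q}$ --- so there is no genuine methodological difference.
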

\begin{proof}
The proof is essentially based on the fact that $-\partial_3 u^3 = \nabla_h \cdot u^h$ and the following usual commutator estimate used with respect to the vertical variable:
\begin{equation}\label{commutator}
\norm {\big[ \Delta_j, a \big]b }_{L^r}\lesssim 2^{-j} \norm{\nabla a}_{L^p} \norm{b}_{L^q}, \;\; \text{with} \; \frac{1}{r}= \frac{1}{p} + \frac{1}{q}.
\end{equation}
For the proof of estimates of type \eqref{commutator} one may see for example \cite{Chemin1}, and for a detailed proof of Lemma \ref{commutator.lemma}, one may see \cite{Chemin}.
\end{proof}

\noindent Let us end this section by recalling the Osgood's Lemma (see for instance \cite{Chemin1}):
\begin{lemma}\textbf{Osgood's lemma}\\
Let $g$ be a measurable function from $[t_0,T]$ to $[0,a]$, $\gamma $ a locally integrable function from $[t_0,T]$ to $\mathbb{R}^+$ and $\mu$ a continuous and non-decreasing function from $[0,a]$ to $\mathbb{R}^+$. Assume that for some non-negative real number $c$, $g$ satisfies:
$$
g(t) \leq c + \int_{t_0}^t \gamma(\tau) \mu (g(\tau)) d\tau, \; \; a.e.\; \; t\in [t_0,T].
$$
Then we have for a.a. $t\in [t_0,T]$: 
\begin{align*}
&c>0 \ \Longrightarrow \
-M(g(t))+ M(c) \leq \int_{t_0}^t \gamma(\tau) d\tau,\quad \text{where } M(x)= \int_{x}^a \frac{d\tau}{\mu(\tau)}. \\
&c=0 \text{ and }\int_0^a \frac{d\tau}{\mu(\tau)} = \infty \ \Longrightarrow \ g=0.
\end{align*}
\end{lemma}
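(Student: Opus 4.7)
The plan is to first handle the case $c>0$ by a standard upper-envelope (Gronwall-type) argument, and then to deduce the case $c=0$ by applying the $c>0$ result with $c$ replaced by an arbitrary $\varepsilon>0$ and letting $\varepsilon\to 0^+$, using the divergence hypothesis on $\int_0^a d\tau/\mu(\tau)$ to force $g$ to vanish.

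For the case $c>0$, I would introduce the dominating function
$$G(t):=c+\int_{t_0}^{t}\gamma(\tau)\,\mu(g(\tau))\,d\tau,\qquad t\in[t_0,T].$$
Because $g$ takes values in the compact $[0,a]$ and $\mu$ is continuous there, $\gamma\mu(g)$ is locally integrable; hence $G$ is absolutely continuous with $G(t_0)=c$ and $g(t)\le G(t)$ for a.e.\ $t$. The monotonicity of $\mu$ then yields $G'(t)=\gamma(t)\mu(g(t))\le\gamma(t)\mu(G(t))$ a.e.\ $t$. Since $G(t)\ge c>0$ and $\mu>0$ on $(0,a]$, the quantity $1/\mu(G(\cdot))$ is bounded, so $M\circ G$ is absolutely continuous and the chain rule combined with $M'(x)=-1/\mu(x)$ gives
$$\frac{d}{dt}\,M(G(t))=-\frac{G'(t)}{\mu(G(t))}\ge -\gamma(t)\quad\text{a.e.\ }t.$$
Integrating from $t_0$ to $t$ and using $G(t_0)=c$ yields $M(G(t))\ge M(c)-\int_{t_0}^{t}\gamma(\tau)\,d\tau$. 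Finally $M$ is non-increasing on $(0,a]$ and $g(t)\le G(t)$ a.e., so $M(g(t))\ge M(G(t))$, which is the claimed inequality.

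For the case $c=0$, the hypothesis $g(t)\le\int_{t_0}^{t}\gamma\mu(g)\,d\tau$ trivially implies, for every $\varepsilon>0$, the stronger inequality $g(t)\le\varepsilon+\int_{t_0}^{t}\gamma\mu(g)\,d\tau$; applying the previous step with $c=\varepsilon$ gives
$$M(g(t))\ge M(\varepsilon)-\int_{t_0}^{t}\gamma(\tau)\,d\tau\quad\text{for a.e.\ }t\in[t_0,T].$$
Suppose for contradiction that $g(t_*)>0$ at some $t_*$ where this holds; then $M(g(t_*))$ is finite since $M$ is continuous and finite on $(0,a]$. However, by the divergence assumption $\int_0^a d\tau/\mu(\tau)=+\infty$, we have $M(\varepsilon)\to+\infty$ as $\varepsilon\to 0^+$, so the right-hand side above tends to $+\infty$, contradicting the finiteness of $M(g(t_*))$. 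Hence $g(t)=0$ for a.e.\ $t$.

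The main obstacle is technical rather than conceptual: the chain-rule computation for $M\circ G$ requires $G$ to stay in a region where $1/\mu(G(\cdot))$ is locally integrable, which is precisely why the positivity $G(t)\ge c>0$ in the first step is essential and why the $c=0$ case cannot be treated directly but only through the $\varepsilon$-approximation together with the divergence hypothesis. A minor subtlety is that $g$ is only measurable and the integral inequality holds only a.e., but this is harmless since all the differential manipulations take place on the continuous dominating function $G$, and the bound $g\le G$ a.e.\ transfers the final estimate back to $g$.
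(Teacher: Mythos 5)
The paper does not prove this lemma: it is stated purely as a recalled tool, with a pointer to the reference \cite{Chemin1}, so there is no in-paper argument to compare against. Your proof is the standard one (essentially the proof given in that reference) and is correct in substance: the dominating function $G$, the differential inequality $\frac{d}{dt}M(G(t))\ge-\gamma(t)$ obtained from $M'=-1/\mu$ and the monotonicity of $\mu$, and the $\varepsilon$-approximation combined with $M(\varepsilon)\to+\infty$ for the $c=0$ case are exactly the right ingredients, and the measurability issues are handled correctly by working on $G$ and intersecting countably many full-measure sets.

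One small technical point you should patch: nothing prevents $G(t)=c+\int_{t_0}^t\gamma\,\mu(g)\,d\tau$ from exceeding $a$, in which case $\mu(G(t))$ and $M(G(t))$ are not defined, since $\mu$ is only given on $[0,a]$. The standard fix is to extend $\mu$ to $[0,+\infty)$ by $\mu(x):=\mu(a)$ for $x>a$; the extension remains continuous, non-decreasing and positive away from $0$, $M(x)=\int_x^a d\tau/\mu(\tau)$ remains well defined (possibly negative) with $M'=-1/\mu$, and every step of your argument, including the final comparison $M(g(t))\ge M(G(t))$ via $g\le G$ and the monotonicity of $M$, goes through verbatim. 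With that one-line remark added, the proof is complete.
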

\section{A priori estimates for the uniqueness topic}

\noindent In this section we establish the main a priori estimates required to prove the uniqueness in our theorems.

\noindent Let $\langle f,g \rangle \bydef\langle f,g \rangle_{L^2(\mathbb{R}^3)}$ be the usual $L^2$-scalar product, and $\langle f,g \rangle_{\alpha,\beta}$ denotes the scalar product between $f$ and $g$ in $H^{\alpha,\beta}(\mathbb{R}^3)$. In order to simplify the redaction, we introduce the following notations:
\begin{alignat*}{2}
&L_1\bydef \displaystyle\sum_{q\geq -1}2^{2q(s-1)}\langle \Delta_q^v(u^h\cdot\nabla_hw),\Delta_q^vw \rangle, \quad 
&&L_2\bydef \displaystyle\sum_{q\geq -1}2^{2q(s-1)}\langle \Delta_q^v(u^3\partial_3 w),\Delta_q^vw \rangle, \\ 
&L_3\bydef \displaystyle\sum_{q\geq -1}2^{2q(s-1)}\langle \Delta_q^v(w^h\cdot\nabla_hv),\Delta_q^vw \rangle, \quad 
&&L_4\bydef \displaystyle\sum_{q\geq -1}2^{2q(s-1)}\langle \Delta_q^v(w^3\partial_3 v),\Delta_q^vw \rangle, \\ 
&L_5\bydef \displaystyle\sum_{q\geq -1}2^{-2qs}\langle \Delta_q^v(u^h\cdot\nabla_h\theta),\Delta_q^v\theta \rangle, \quad 
&&L_6\bydef \displaystyle\sum_{q\geq -1}2^{-2qs}\langle \Delta_q^v(u^3\partial_3\theta),\Delta_q^v\theta \rangle, \\ 
&L_7\bydef \displaystyle\sum_{q\geq -1}2^{-2qs}\langle \Delta_q^v(w^h\cdot\nabla_h\eta),\Delta_q^v\theta \rangle, \quad 
&&L_8\bydef\displaystyle\sum_{q\geq -1}2^{-2qs}\langle \Delta_q^v(w^3\partial_3 \eta),\Delta_q^v\theta \rangle, \\ 
&L_9\bydef\displaystyle\sum_{q\geq -1}2^{2q(s-1) }\langle \Delta_q^v\theta, \Delta_q^v(w^3) \rangle. &&
\end{alignat*}
We shall prove: 
\begin{prop} \label{proposition1}
Let $s\in]\frac{1}{2},1]$. Then for $u,v,w,\rho,\eta,\theta$ verifying:  
\begin{alignat*}{2}
&u,v,\nabla_h u, \nabla_h v \in H^{0,s}, \quad &&\rho,\eta,\nabla_h \rho, \nabla_h \eta \in H^{0,1-s}, \\
&w,\nabla_h w \in H^{0,s-1}, \quad &&\theta,\nabla_h \theta \in H^{0,-s}, \\ 
&\div u=\div v=\div w=0, &&
\end{alignat*}
we have:
\begin{alignat*}{2}
&L_1\lesssim \left\|u \right\|_{\frac{1}{2},s} \left\|\nabla_h w  \right\|_{0,s-1} \left\| w \right\|_{\frac{1}{2},s-1}, \quad 
&&L_2\lesssim\left\|\nabla_h u \right\|_{0,s} \left\| w \right\|_{\frac{1}{2},s-1}^2, \\
&L_3\lesssim\left\| \nabla_h v \right\|_{0,s} \left\| w \right\|_{\frac{1}{2},s-1}^2, \quad 
&&L_4\lesssim\left\|v \right\|_{\frac{1}{2},s} \big(\left\|w\right\|_{0,s-1} +  \left\|\nabla_h w\right\|_{0,s-1} \big) \left\| w \right\|_{\frac{1}{2},s-1}, \\ 
&L_5\lesssim \left\| u\right\|_{\frac{1}{2},s} \left\|\nabla_h \theta \right\|_{0,-s} \left\|\theta \right\|_{\frac{1}{2},-s}, 
\quad &&L_6\lesssim \left\|\nabla_h u \right\|_{0,s} \left\| \theta\right\|_{\frac{1}{2},-s}^2, \\ 
&L_7\lesssim \left\|\nabla_h \eta\right\|_{0,1-s} \left\|w \right\|_{\frac{1}{2},s-1} \left\| \theta\right\|_{\frac{1}{2},-s}, 
\quad &&L_8\lesssim \left\| \eta\right\|_{\frac{1}{2},1-s} \big(\left\|w \right\|_{0,s-1} + \left\|\nabla_h w \right\|_{0,s-1} \big)\left\| \theta\right\|_{\frac{1}{2},-s}, \\ 
&L_9\lesssim \norm {\theta}_{0,-s}(\norm {\nabla_h w}_{0,s-1}+ \norm {w}_{0,s-1} ). &&
\end{alignat*}
\end{prop}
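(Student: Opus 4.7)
The approach is the standard anisotropic Littlewood--Paley scheme: for each $L_i$ I split the product inside $\Delta_q^v$ by Bony's decomposition in the vertical variable into the two paraproducts $T^v$ and the remainder $R^v$, estimate every piece with the anisotropic Bernstein inequality (Lemma~\ref{ber}) together with H\"older in the horizontal variables and the embedding $H^{1/2}_h\hookrightarrow L^4_h$, insert the dyadic weight $2^{2q(s-1)}$ or $2^{-2qs}$, and then sum in $q$ by Cauchy--Schwarz using the square-summable sequences $(c_q),(\tilde c_q)$ furnished by \eqref{stat}--\eqref{stat2}. Because $s>1/2$, the geometric sums that appear when estimating $S^v_{q-1}$ in $L^\infty_v$ converge, which is precisely where the strict inequality is used (and why the argument fails at $s=1/2$, as mentioned in the introduction).

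For the horizontal transport terms $L_1, L_3, L_5, L_7$, the horizontal gradient commutes with $\Delta_q^v$, so a typical term after Bony and localization reads
\[
\bigl|\langle \Delta_q^v(S^v_{q-1}f\,\Delta_q^v\nabla_h g),\,\Delta_q^v h\rangle\bigr| \lesssim \norm{S^v_{q-1}f}_{L^\infty_v(L^4_h)}\,\norm{\Delta_q^v\nabla_h g}_{L^2_v(L^2_h)}\,\norm{\Delta_q^v h}_{L^2_v(L^4_h)},
\]
and the $L^4_h$-factors yield, via $H^{1/2}_h \hookrightarrow L^4_h$, the $\norm{\cdot}_{1/2,\cdot}$ norms on the right-hand side. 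The localization identity $\Delta_q^v(S^v_{q'-1}\cdot\,\Delta_{q'}^v\cdot)=0$ for $|q-q'|\geq 5$ and a standard $\ell^2$ manipulation close the dyadic sum; the other paraproduct and the remainder $R^v$ are handled symmetrically, the latter using Bernstein in the vertical direction together with $s\leq 1$ to make the geometric series in the remainder converge.

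The vertical transport terms $L_2, L_4, L_6, L_8$ are the heart of the proof: a naive estimate would demand control of $\partial_3 w$ or $\partial_3 v$ in spaces we do not have. Following Chemin, on each Bony piece where the vertical derivative falls on the low-regularity factor (as in $S^v_{q-1}u^3\,\Delta_q^v\partial_3 w$ inside $L_2$), I insert the commutator of Lemma~\ref{commutator.lemma} to trade $\partial_3 w$ for $w$ at the cost of a compensating factor $2^{-q}$, and use the divergence-free identity $\partial_3 u^3=-\nabla_h\cdot u^h$ to convert any vertical derivative of the velocity into a horizontal one. This is exactly what produces the $\norm{\nabla_h u}_{0,s}$ factor in the bound for $L_2$ and the analogous factor in $L_6$. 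On the remaining paraproduct and on $R^v$, Bernstein in $x_3$ moves the vertical derivative to the localized frequency, giving a $2^q$ that combines harmlessly with the weight.

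The main obstacle will be $L_4$ and $L_8$: the low-regularity factor $w^3$ itself carries no derivative, and the right-hand side must involve \emph{both} $\norm{w}_{0,s-1}$ and $\norm{\nabla_h w}_{0,s-1}$. I will split $w^3 = S^v_0 w^3 + (I-S^v_0)w^3$: the low-frequency part is bounded by $\norm{w}_{0,s-1}$ directly via Bernstein, while on high frequencies the identity $\partial_3 w^3=-\nabla_h\cdot w^h$ gives $\norm{\Delta_q^v w^3}_{L^2}\lesssim 2^{-q}\norm{\Delta_q^v\nabla_h w^h}_{L^2}$ for $q\geq 0$, producing the $\norm{\nabla_h w}_{0,s-1}$ factor with the $2^{-q}$ comfortably absorbed by the dyadic weight. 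The term $L_9$ is treated by the very same low/high split: it reduces to a duality pairing between $\Delta_q^v \theta\in H^{0,-s}$ and $\Delta_q^v w^3\in H^{0,s-1}$ on low vertical frequencies, and between $\Delta_q^v\theta$ and $2^{-q}\Delta_q^v\nabla_h\cdot w^h$ on high ones, which yields the stated bound directly.
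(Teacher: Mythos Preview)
Your plan is correct and matches the paper's proof essentially point for point: the horizontal terms $L_1,L_3,L_5,L_7$ are handled by the product lemma/Bony decomposition with $H^{1/2}_h\hookrightarrow L^4_h$; the vertical terms $L_2,L_6$ use the Chemin commutator decomposition plus $\partial_3 u^3=-\nabla_h\!\cdot u^h$; and for $L_4,L_8,L_9$ the paper uses exactly your low/high-frequency splitting of $w^3$, phrased there as the inequality $\|w^3\|_{0,s}\leq \|w^3\|_{0,s-1}+\|\nabla_h w\|_{0,s-1}$. The only cosmetic difference is that the paper invokes the packaged product rule (Lemma~\ref{productrule1}) for $L_1,L_3,L_5$ rather than unpacking the Bony decomposition by hand.
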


\begin{proof}
In the following we denote by $c_q$ some constant $c_q \bydef c_q(u,v,w,\theta,\rho,t)$ with $\displaystyle\sum_{q\geq -1}c_q^2 \leq 1$ which comes from the fact (\ref{stat}) or \eqref{stat2}. This constant is allowed to differ from one line to another. \\ 

\noindent $\bullet$~\textbf{$L_1$ estimate }\\
Since $s+(s-1)>0$, by using product Lemma \ref{productrule1} between $H^{\frac{1}{2},s}$ and $H^{0,s-1}$, we obtain
\begin{align*}
L_1= \langle u^h\cdot\nabla_hw , w \rangle_{0,s-1} &\leq \left\| u^h\cdot\nabla_hw \right\|_{-\frac{1}{2},s-1} \left\| w \right\|_{\frac{1}{2},s-1}\\
  & \lesssim \left\|u \right\|_{\frac{1}{2},s} \left\|\nabla_h w \right\|_{0,s-1} \left\| w \right\|_{\frac{1}{2},s-1}.
\end{align*}
\noindent $\bullet$~\textbf{$L_2$ estimate }\\
We write $L_2=L^{(1)}_2+L^{(2)}_2+L^{(3)}_2$ where
\begin{align*}
L^{(1)}_2&\bydef \displaystyle\sum_{q\geq -1}2^{2q(s-1)}\displaystyle\sum_{|j-q|\leq N_0}\langle\Delta_q^v\big( \Delta_j^v(u^3)S_{j-1}^v(\partial_3w)\big),\Delta_q^v w\rangle,\\
L^{(2)}_2&\bydef \displaystyle\sum_{q\geq -1}2^{2q(s-1)}\displaystyle\sum_{|j-q|\leq N_0}\langle\Delta_q^v\big( S_{j-1}^v(u^3)\Delta_j^v(\partial_3w)\big),\Delta_q^v w\rangle,\\
L^{(3)}_2&\bydef \displaystyle\sum_{q\geq -1}2^{2q(s-1)}\displaystyle\sum_{i\in\{0,-1,1\}}\displaystyle\sum_{j\geq q-N_0}\langle\Delta_q^v\big( \Delta_{j+1}^v(u^3)\Delta_j^v(\partial_3w)\big),\Delta_q^v w\rangle.
\end{align*}
Then, by using the embedding of $H^{\frac{1}{2}}(\mathbb{R}_h^2)$ in $L^4(\mathbb{R}_h^2)$, Bernstein Lemma for the vertical variable together with statement \eqref{stat}, we obtain
\begin{align*}
L^{(1)}_2 &\lesssim \left\|w \right\|_{\frac{1}{2},s-1} \displaystyle\sum_{q\geq -1}c_q2^{q(s-1)}2^{q/2}\displaystyle\sum_{|j-q|\leq N_0}\left\|S_{j-1}^vw\right\|_{L^4_hL^{2}_v} \left\|\Delta_j^v\nabla_h u\right\|_{L^2}\\
&\lesssim \left\|w \right\|_{\frac{1}{2},s-1}\left\|w\right\|_{\frac{1}{2},s-1}\left\|\nabla_h u\right\|_{0,s} \displaystyle\sum_{q\geq -1}c_q2^{q(s-\frac{1}{2})}\displaystyle\sum_{|j-q|\leq N_0}c_j^22^{j(1-s)} 2^{-sj}\\
&\lesssim \left\|w \right\|_{\frac{1}{2},s-1}\left\|w\right\|_{\frac{1}{2},s-1}\left\|\nabla_h u\right\|_{0,s} \displaystyle\sum_{q\geq -1}c_q2^{q(s-\frac{1}{2})}2^{q(1-s)} 2^{-qs}\\
&\lesssim \left\|w \right\|_{\frac{1}{2},s-1}\left\|w\right\|_{\frac{1}{2},s-1}\left\|\nabla_h u\right\|_{0,s} \displaystyle\sum_{q\geq -1}c_q2^{-q(s-\frac{1}{2})}\\
&\lesssim \left\|w \right\|_{\frac{1}{2},s-1}\left\|w\right\|_{\frac{1}{2},s-1}\left\|\nabla_h u\right\|_{0,s}.
\end{align*}
To estimate $L^{(2)}_2$, we consider the decomposition used in \cite{Chemin}, by writing $L^{(2)}_2 = A_1+A_2+A_3$, with
\begin{align*}
A_1 &\bydef\displaystyle\sum_{q\geq -1}2^{2q(s-1)} \langle S_q^v(u^3)\Delta_q^v(\partial_3w),\Delta_q^v w\rangle,\\
A_2&\bydef\displaystyle\sum_{q\geq -1}2^{2q(s-1)}\displaystyle\sum_{|j-q|\leq N_0}\langle\big(S_q^v - S_{j-1}^v\big)(u^3)\Delta_j^v(\partial_3w),\Delta_q^v w\rangle,\\
A_3&\bydef \displaystyle\sum_{q\geq -1}2^{2q(s-1)}\displaystyle\sum_{|j-q|\leq N_0}\langle\big[\Delta_q^v, S_{j-1}^v(u^3)\big]\Delta_j^v(\partial_3w),\Delta_q^vw\rangle,
\end{align*}
where $[\Delta_q^v, S_{j-1}^v(u^3)\big]$ denotes the commutator between $\Delta_q^v$ and $S_{j-1}^v(u^3)$.\\
After integration by parts we obtain
\begin{align*}
A_1&=- \frac{1}{2} \displaystyle\sum_{q\geq -1}2^{2q(s-1)} \langle S_q^v(\partial_3u^3)\Delta_q^v(w),\Delta_q^v w\rangle\\
&=\frac{1}{2} \displaystyle\sum_{q\geq -1}2^{2q(s-1)} \langle S_q^v(\nabla_h\cdot u^h)\Delta_q^v(w),\Delta_q^v w\rangle\\ 
&\lesssim \left\|w \right\|_{\frac{1}{2},s-1} \displaystyle\sum_{q\geq -1}c_q2^{-q(s-1)}\left\|\Delta_{q}^vw\right\|_{L^4_hL^{2}_v} \left\|S_q^v(\nabla_h u)\right\|_{L^2_hL^{\infty}_v}\\
&\lesssim \left\|w \right\|_{\frac{1}{2},s-1}^2\left\|\nabla_h u\right\|_{0,s}.
\end{align*}
In order to estimate $A_2$, we remark first that $S_q^v-S_{j-1}^v$ is supported away from $0$ in Fourier side, that is we can use Lemma \ref{ber} to estimate $A_2$ just like $L_2^{(1)}$. Indeed
\begin{align*}
A_2 &\lesssim \left\|w \right\|_{\frac{1}{2},s-1} \displaystyle\sum_{q\geq -1}c_q2^{q(s-1)}\displaystyle\sum_{|j-q|\leq N_0}\left\|\big(S_q^v-S_{j-1}^v\big)\partial_3 u^3\right\|_{L^2_hL^{\infty}_v} \left\|\Delta_j^vw\right\|_{L_h^4L^2_v}\\
&\lesssim \left\|w \right\|_{\frac{1}{2},s-1} \displaystyle\sum_{q\geq -1}c_q2^{q(s-1)}\displaystyle\sum_{|j-q|\leq N_0}\left\|\big(S_q^v-S_{j-1}^v\big)\nabla u^h\right\|_{L^2_hL^{\infty}_v}\left\|\Delta_j^vw\right\|_{L^4_hL^2_v}\\
&\lesssim \left\|w \right\|_{\frac{1}{2},s-1}^2\left\|\nabla_h u \right\|_{0,s} \displaystyle\sum_{|i|\leq N_0}\displaystyle\sum_{q\geq -1}c_q2^{q(s-1)}c_{j+i}2^{(q+i)(1-s)}\\
&\lesssim \left\|w \right\|_{\frac{1}{2},s-1}^2\left\|\nabla_h u \right\|_{0,s} .
\end{align*}
Finally, for $A_3$ we use the commutator estimate proved in Lemma \ref{commutator.lemma} to obtain
\begin{align*}
A_3 &\lesssim \left\|w \right\|_{\frac{1}{2},s-1} \displaystyle\sum_{q\geq -1}c_q2^{q(s-1)}\displaystyle\sum_{|j-q|\leq N_0}\left\|\big[\Delta_q^v, S_{j-1}^v(u^3)\big]\Delta_j^v(\partial_3w)\right\|_{L^2\big(\mathbb{R}_{x_3};H^{-\frac{1}{2}}(\mathbb{R}^2)\big)}\\
&\lesssim \left\|w \right\|_{\frac{1}{2},s-1} \displaystyle\sum_{q\geq -1}c_q2^{q(s-1)}\displaystyle\sum_{|j-q|\leq N_0}\left\| S_{j-1}^v\nabla_h u(.,x_3) \right\|_{L^2_hL^{\infty}_v} \left\|\Delta_j^v w\right\|_{\frac{1}{2},0}\\
&\lesssim \left\|w \right\|_{\frac{1}{2},s-1}^2 \left\|\nabla_h u\right\|_{0,s}\displaystyle\sum_{i\in\{0,-1,1\}}\displaystyle\sum_{q\geq -1}c_qc_{q+i}\\
&\lesssim \left\|w \right\|_{\frac{1}{2},s-1}^2 \left\|\nabla_h u\right\|_{0,s}.
\end{align*}
Ditto for the last term in this part, using the fact that $\partial_3u^3=-\nabla_h\cdot u^h$, it happens
\begin{align*}
L^{(3)}_2 &\lesssim \left\|w \right\|_{\frac{1}{2},s-1} \displaystyle\sum_{q\geq -1}c_q2^{q(s-1)}2^{q/2}\displaystyle\sum_{i\in\{0,-1,1\}}\displaystyle\sum_{j\geq q-N_0}\left\|\Delta_{j+i}^vw\right\|_{L^4_hL^{2}_v} \left\|\Delta_j^v\nabla_h u\right\|_{L^2}\\
&\lesssim \left\|w \right\|_{\frac{1}{2},s-1}\left\|w\right\|_{\frac{1}{2},s-1}\left\|\nabla_h u\right\|_{0,s} \displaystyle\sum_{q\geq -1}c_q2^{q(s-\frac{1}{2})}\displaystyle\sum_{i\in\{0,-1,1\}}\displaystyle\sum_{j\geq q-N_0}c_jc_{j+i}2^{j(1-s)} 2^{-sj}\\
&\lesssim \left\|w \right\|_{\frac{1}{2},s-1}^2\left\|\nabla_h u\right\|_{0,s} \displaystyle\sum_{q\geq -1}c_q2^{q(s-\frac{1}{2})}2^{q(1-2s)}\displaystyle\sum_{i\in\{0,-1,1\}}\displaystyle\sum_{j\geq q-N_0}c_jc_{j+i},
\end{align*}
where we used the fact that $s\in]\frac{1}{2},1]$ that is $1-2s<0$. We obtain finally
\begin{align*}
L^{(3)}_2 &\lesssim \left\|w \right\|_{\frac{1}{2},s-1}^2\left\|\nabla_h u\right\|_{0,s} \displaystyle\sum_{q\geq -1}c_q2^{-q(s-\frac{1}{2})}\\
&\lesssim \left\|w \right\|_{\frac{1}{2},s-1}^2\left\|\nabla_h u\right\|_{0,s}.
\end{align*}
\textbf{Remark} In the case where $s=1$ we do not have to deal with $L_1+L_2$ which is equal to $0$ because of the identity $\langle u\cdot \nabla w,w \rangle=0$.\\
\noindent $\bullet$~\textbf{$L_3$ estimate }\\
By using product Lemma \ref{productrule1} between $ H^{0,s}$ and $ H^{\frac{1}{2},s-1}$ we obtain
\begin{align*}
L_3= \langle w^h\nabla_hv , w \rangle_{0,s-1} &\leq \left\| w^h\nabla_hv \right\|_{-\frac{1}{2},s-1} \left\| w \right\|_{\frac{1}{2},s-1}\\
  & \lesssim \left\|\nabla_hv\right\|_{0,s} \left\|w \right\|_{\frac{1}{2},s-1}^2.
\end{align*}
\noindent $\bullet$~\textbf{$L_4$ estimate }\\
We write $L_4=L^{(1)}_4+L^{(2)}_4$, where
\begin{align*}
L^{(1)}_4 \bydef\displaystyle\sum_{q\geq -1}2^{2q(s-1)}\displaystyle\sum_{j\geq q-N_0}\langle \Delta_q^v(\Delta_j^vw^3S_{j+2}^v(\partial_3 v)),\Delta_q^vw \rangle ,
\end{align*}
\begin{align*}
L^{(2)}_4\bydef \displaystyle\sum_{q\geq -1}2^{2q(s-1)}\displaystyle\sum_{|j-q|\leq N_0}\langle \Delta_q^v(S_{j-1}^v(w^3)\Delta_j^v\partial_3 v),\Delta_q^vw \rangle.
\end{align*}
Hence, by using again Lemma \ref{productrule1}, we infer that
\begin{align*}
L^{(1)}_4 &\leq \left\|w \right\|_{\frac{1}{2},s-1}\displaystyle\sum_{q\geq -1}c_q 2^{q(s-\frac{1}{2})}\displaystyle\sum_{j\geq q-N_0}\left\|\Delta_j^vw^3 \right\|_{L^2} \left\| S_{j+2}^v(\partial_3 v)\right\|_{L^4_hL^2_v}.
\end{align*}
For $s\neq 1$, after certain calculations, we get
\begin{align*}
\left\| S_{j+2}^v(\partial_3 v)\right\|_{L^4_hL^2_v}&\leq  \displaystyle\sum_{m\leq j+1} 2^{m(1-s)}2^{sm}\left\|\Delta_m^vv\right\|_{L^4_hL^2_v}\\
&\leq \bigg(\displaystyle\sum_{m\leq j+1} 2^{2m(1-s)}\bigg)^{\frac{1}{2}}\left\|v \right\|_{\frac{1}{2},s} \\
&\lesssim 2^{j(1-s)}\left\|v \right\|_{\frac{1}{2},s}.
\end{align*}
Thus, by using Lemma \ref{ber} together with the previous estimate and the divergence free condition on $w$, we find
\begin{align*}
L^{(1)}_4 &\lesssim \left\|w \right\|_{\frac{1}{2},s-1}\left\|v \right\|_{\frac{1}{2},s}\displaystyle\sum_{q\geq -1}c_q 2^{q(s-\frac{1}{2})}\displaystyle\sum_{j\geq q-N_0}2^{-js}\left\|\Delta_j^v\nabla_h w \right\|_{L^2}\\
&\lesssim \left\|w \right\|_{\frac{1}{2},s-1}\left\|v \right\|_{\frac{1}{2},s}\left\|\nabla_h w\right\|_{0,s-1}\displaystyle\sum_{q\geq -1}c_q 2^{q(s-\frac{1}{2})}\displaystyle\sum_{j\geq q-N_0}c_j2^{j(1-2s)} \\
&\lesssim \left\|w \right\|_{\frac{1}{2},s-1}\left\|v \right\|_{\frac{1}{2},s}\left\|\nabla_h w\right\|_{0,s-1}\displaystyle\sum_{q\geq -1}c_q 2^{q(s-\frac{1}{2})}\bigg(\displaystyle\sum_{j\geq q-N_0}2^{2j(1-2s)}\bigg)^{\frac{1}{2}}\left\| c_j \right\|_{l^2(\mathbb{N}\cup\{-1\})}\\
&\lesssim \left\|w \right\|_{\frac{1}{2},s-1}\left\|v \right\|_{\frac{1}{2},s}\left\|\nabla_h w\right\|_{0,s-1}\displaystyle\sum_{q\geq -1}c_q 2^{q(s-\frac{1}{2})}2^{q(1-2s)}\\
&\lesssim \left\|w \right\|_{\frac{1}{2},s-1}\left\|v \right\|_{\frac{1}{2},s}\left\|\nabla_h w\right\|_{0,s-1}\displaystyle\sum_{q\geq -1}c_q 2^{q(\frac{1}{2}-s)}\\
&\lesssim \left\|w \right\|_{\frac{1}{2},s-1}\left\|v \right\|_{\frac{1}{2},s}\left\|\nabla_h w\right\|_{0,s-1}.
\end{align*}
For the second term we proceed as follows
\begin{align*}
L^{(2)}_4 &\lesssim \left\|w \right\|_{\frac{1}{2},s-1} \displaystyle\sum_{q\geq -1}c_q2^{q(s-1)}2^q\displaystyle\sum_{|j-q|\leq N_0}2^{j-q}\left\|S_{j-1}^vw^3\right\|_{L^2_hL^{\infty}_v} \left\|\Delta_j^vv\right\|_{L^4_hL^{2}_v} \\
&\lesssim \left\|w \right\|_{\frac{1}{2},s-1}\left\|w^3\right\|_{0,s}\displaystyle\sum_{|j|\leq N_0} \displaystyle\sum_{q\geq -1}c_q2^{q(s-1)}2^q\left\|\Delta_{j+q}^vv\right\|_{L^4_hL^{2}_v}\\
&\lesssim\left\|v \right\|_{\frac{1}{2},s} \left\|w \right\|_{\frac{1}{2},s-1}\left\|w^3\right\|_{0,s}\displaystyle\sum_{|i|\leq N_0} \displaystyle\sum_{q\geq -1}c_qc_{q+i}\\
&\lesssim\left\|v \right\|_{\frac{1}{2},s} \left\|w \right\|_{\frac{1}{2},s-1}\left\|w^3\right\|_{0,s}.
\end{align*}
In order to close the estimates of $L^{(2)}_4$ we remark that, for any $s\in [\frac{1}{2},1]$, we have
\begin{align*}
\left\|w^3\right\|_{0,s}&\leq \left\|w^3\right\|_{0,s-1} +\left\|\partial_3w^3\right\|_{0,s-1}\\
&\leq \left\|w^3\right\|_{0,s-1} +\left\|\nabla_hw\right\|_{0,s-1}.
\end{align*}

\noindent In the case where $s=1$, note that the estimate can be obtained easily, by using product rules and the previous inequality, as the following
\begin{align*}
\langle w^3\partial_3 v , w \rangle_{L^2} &\leq \norm {w^3\partial_3v}_{-\frac{1}{2},0}\norm {w}_{\frac{1}{2},0}\\
&\lesssim \norm {w^3}_{0,1}\norm {\partial_3 v}_{\frac{1}{2},0}\norm {w}_{\frac{1}{2},0}\\
&\lesssim \norm {v}_{\frac{1}{2},1}(\norm {w}_{L^2}+\norm {\nabla_hw}_{L^2})\norm {w}_{\frac{1}{2},0}.
\end{align*}
\noindent $\bullet$~\textbf{$L_5$ estimate }\\
In order to estimate this term we proceed by duality by inferring firstly that
\begin{equation*}
L_5\leq \norm {u^h\nabla_h \theta}_{-\frac{1}{2}, -s} \norm \theta_{\frac{1}{2},-s}.
\end{equation*}
Moreover, Lemma \ref{productrule1} gives
 \begin{equation*}
 \norm {u^h\nabla_h \theta}_{-\frac{1}{2}, -s}\lesssim \norm {u^h}_{\frac{1}{2},s} \norm {\nabla_h\theta}_{0,-s} .
 \end{equation*}
It follows that
 \begin{equation*}
 L_5\lesssim \norm {u^h}_{\frac{1}{2},s} \norm {\nabla_h\theta}_{0,-s} \norm \theta_{\frac{1}{2},-s}.
 \end{equation*}
\noindent $\bullet$~\textbf{$L_6$ estimate} \\ 
We use the Bony's decomposition $L_6= L_6^{(1)}+L_6^{(2)}+L_6^{(3)}$, where
\begin{align*}
L^{(1)}_6&\bydef \displaystyle\sum_{q\geq -1}2^{2q(-s)}\displaystyle\sum_{|j-q|\leq N_0}\langle\Delta_q^v\big( \Delta_j^v(u^3)S_{j-1}^v(\partial_3\theta)\big),\Delta_q^v \theta\rangle,\\
L^{(2)}_6&\bydef \displaystyle\sum_{q\geq -1}2^{2q(-s)}\displaystyle\sum_{|j-q|\leq N_0}\langle\Delta_q^v\big( S_{j-1}^v(u^3)\Delta_j^v(\partial_3\theta)\big),\Delta_q^v \theta\rangle,\\
L^{(3)}_6&\bydef \displaystyle\sum_{q\geq -1}2^{2q(-s)}\displaystyle\sum_{i\in\{0,-1,1\}}\displaystyle\sum_{j\geq q-N_0}\langle\Delta_q^v\big( \Delta_{j+1}^v(u^3)\Delta_j^v(\partial_3\theta)\big),\Delta_q ^v\theta\rangle.
\end{align*} 
For the first term, we use the Bernstein Lemma together with usual Sobolev embedding and the free divergence condition to obtain
\begin{align*}
L^{(1)}_6 &\leq  C \left\|\theta \right\|_{\frac{1}{2},-s} \displaystyle\sum_{q\geq -1}c_q2^{-qs}2^{q/2}\displaystyle\sum_{|j-q|\leq N_0}\left\|S_{j-1}^v\theta\right\|_{L^4_hL^{2}_v} \left\|\Delta_j^v\nabla_h \cdot u^h\right\|_{L^2}\\
&\lesssim \left\|\theta \right\|_{\frac{1}{2},-s}\left\|\theta\right\|_{\frac{1}{2},s-1}\left\|\nabla_h u\right\|_{0,s} \displaystyle\sum_{q\geq -1}c_q2^{q(\frac{1}{2}-s)}\displaystyle\sum_{|j-q|\leq N_0}c_j^2\\
&\lesssim \left\|\theta \right\|_{\frac{1}{2},-s}^2\left\|\nabla_h u\right\|_{0,s}.
\end{align*}
For $L^{(2)}_6$, we follow the same decomposition used for $L^{(2)}_2 $, so we write $L^{(2)}_6 = B_1+B_2+B_3$, 
where
\begin{align*}
B_1 &\bydef\displaystyle\sum_{q\geq -1}2^{-2qs} \langle S_q^v(u^3)\Delta_q^v(\partial_3\theta),\Delta_q^v \theta\rangle,\\
B_2&\bydef\displaystyle\sum_{q\geq -1}2^{-2qs}\displaystyle\sum_{|j-q|\leq N_0}\langle\big(S_q^v - S_{j-1}^v\big)(u^3)\Delta_j^v(\partial_3\theta),\Delta_q^v \theta\rangle,\\
B_3&\bydef \displaystyle\sum_{q\geq -1}2^{-2qs}\displaystyle\sum_{|j-q|\leq N_0}\langle\big[\Delta_q^v, S_{j-1}^v(u^3)\big]\Delta_j^v(\partial_3\theta),\Delta_q^v \theta\rangle.
\end{align*}
After integration by parts, we obtain
\begin{align*}
B_1&= \frac{1}{2}\displaystyle\sum_{q\geq -1}2^{-2qs} \langle S_q^v(\partial_3u^3)\Delta_q^v(\theta),\Delta_q^v \theta\rangle\\
&= \frac{1}{2}\displaystyle\sum_{q\geq -1}2^{-2qs} \langle S_q^v(\nabla_h u^h)\Delta_q^v(\theta),\Delta_q^v \theta\rangle\\ 
&\lesssim \left\|\theta \right\|_{\frac{1}{2},-s} \displaystyle\sum_{q\geq -1}c_q2^{-qs}\left\|\Delta_{q}^v\theta\right\|_{L^4_hL^{2}_v} \left\|S_q^v(\nabla_h u)\right\|_{L^2_hL^{\infty}_v}\\
&\lesssim \left\|\theta \right\|_{\frac{1}{2},s-1}^2\left\|\nabla_h u\right\|_{0,s}.
\end{align*}
To estimate $B_2$ we remark first that $S_q^v-S_{j-1}^v$ is supported away from $0$ in Fourier side, that is we can use Lemma \ref{ber} and estimate $B_2$ as $A_2$, indeed
\begin{align*}
B_2 &\lesssim \left\|\theta \right\|_{\frac{1}{2},-s} \displaystyle\sum_{q\geq -1}c_q2^{-qs}\displaystyle\sum_{|j-q|\leq N_0}\left\|\big(S_q^v-S_{j-1}^v\big)\partial_3 u^3\right\|_{L^2_hL^{\infty}_v} \left\|\Delta_j^v\theta\right\|_{L^4_hL^2_v}\\
&\lesssim \left\|\theta \right\|_{\frac{1}{2},-s} \displaystyle\sum_{q\geq -1}c_q2^{-qs}\displaystyle\sum_{|j-q|\leq N_0}\left\|\big(S_q^v-S_{j-1}^v\big)\nabla_h u^h\right\|_{L^2_hL^{\infty}_v}\left\|\Delta_j^v\theta\right\|_{L^4_hL^2_v}\\
&\lesssim \left\|\theta \right\|_{\frac{1}{2},-s}^2\left\|\nabla_h u \right\|_{0,s} \displaystyle\sum_{|i|\leq N_0}\displaystyle\sum_{q\geq -1}c_qc_{q+i}\\
&\lesssim \left\|\theta \right\|_{\frac{1}{2},-s}^2\left\|\nabla_h u \right\|_{0,s} .
\end{align*}
Finally, for $B_3$ we use the commutator estimate proved in Lemma \ref{commutator.lemma} to obtain
\begin{align*}
B_3 &\lesssim \left\|\theta \right\|_{\frac{1}{2},-s} \displaystyle\sum_{q\geq -1}c_q2^{-qs}\displaystyle\sum_{|j-q|\leq N_0}\left\|\big[\Delta_q^v, S_{j-1}^v(u^3)\big]\Delta_j^v(\partial_3\theta)\right\|_{L^2\big(\mathbb{R}_{x_3};H^{-\frac{1}{2}}(\mathbb{R}^2)\big)}\\
&\lesssim  \left\|\theta \right\|_{\frac{1}{2},-s} \displaystyle\sum_{q\geq -1}c_q2^{-qs}\displaystyle\sum_{|j-q|\leq N_0}\left\| S_{j-1}^v\nabla_h u(.,x_3) \right\|_{L^2_hL^{\infty}_v} \left\|\Delta_j ^v\theta\right\|_{\frac{1}{2},0}\\
&\lesssim \left\|\theta \right\|_{\frac{1}{2},-s}^2 \left\|\nabla_h u\right\|_{0,s}\displaystyle\sum_{i\in\{0,-1,1\}}\displaystyle\sum_{q\geq -1}c_qc_{q+i}\\
&\lesssim \left\|\theta \right\|_{\frac{1}{2},-s}^2 \left\|\nabla_h u\right\|_{0,s}.
\end{align*}
For $L^{(3)}_6$, by using the same arguments we find
\begin{align*}
L^{(3)}_6 &\lesssim \left\|\theta \right\|_{\frac{1}{2},-s} \displaystyle\sum_{q\geq -1}c_q2^{-qs}2^{q/2}\displaystyle\sum_{i\in\{0,-1,1\}}\displaystyle\sum_{j\geq q-N_0}\left\|\Delta_{j+i}^v\theta\right\|_{L^4_hL^{2}_v} \left\|\Delta_j^v\nabla_h \cdot u^h\right\|_{L^2}\\
&\lesssim \left\|\theta \right\|_{\frac{1}{2},-s}\left\|\theta\right\|_{\frac{1}{2},-s}\left\|\nabla_h u\right\|_{0,s} \displaystyle\sum_{q\geq -1}c_q2^{-q(s-\frac{1}{2})}\displaystyle\sum_{i\in\{0,-1,1\}}\displaystyle\sum_{j\geq q-N_0}c_jc_{j+i}\\
&\lesssim \left\|\theta \right\|_{\frac{1}{2},-s}^2\left\|\nabla_h u\right\|_{0,s}.\\
\end{align*}
\noindent $\bullet$~\textbf{$L_7$ estimate }\\
This term can be estimated by using the following property based on product rules in dimension one together with inequality \eqref{nonhomogeneous-embedding}
$$ H^{s-1}(\mathbb{R}) \cdot  H^{1-s}(\mathbb{R}) \subset B^{-\frac{1}{2}}_{2,\infty}(\mathbb{R}) \hookrightarrow H^{-s}.$$
Indeed, based on the Bony's decomposition with respect to the vertical variable we write $L_7= L_7^{(1)}+L_7^{(2)}+L_7^{(3)}$, 
where
\begin{align*}
L_7^{(1)} &\bydef \displaystyle\sum_{q\geq -1}2^{-2qs}\displaystyle\sum_{|j-q|\leq N_0}\langle\Delta_q^v\big( S_{j-1}^v(w^h)\Delta_j^v(\nabla_h\eta)\big),\Delta_q^v\theta \rangle,\\
L_7^{(2)} &\bydef \displaystyle\sum_{q\geq -1}2^{-2qs}\displaystyle\sum_{|j-q|\leq N_0}\langle \Delta_q^v\big(\Delta_j^v(w^h)S_{j-1}^v(\nabla_h\eta)\big),\Delta_q^v\theta \rangle,\\
L_7^{(3)} &\bydef \displaystyle\sum_{q\geq -1}2^{-2qs}\displaystyle\sum_{i\in\{0,-1,1\}}\displaystyle\sum_{j \geq q-N_0}\langle \Delta_q^v\big( \Delta_j^v(w^h)\Delta_{j+i}^v(\nabla_h\eta)\big),\Delta_q^v\theta \rangle.
\end{align*}
By using inequality \eqref{stat2}, similar arguments give then
\begin{align*}
L_7^{(1)} &\leq \left\| \theta \right\|_{\frac{1}{2},-s} \displaystyle\sum_{q\geq -1}c_q2^{-q(s-\frac{1}{2})}  \displaystyle\sum_{|j-q|\leq N_0}\left\| S_{j-1}^vw \right\|_{L^4_hL^2_v}\left\|\Delta_{j}^v(\nabla_h\eta) \right\|_{L^2}\\
 &\leq \left\|\nabla_h \eta\right\|_{0,1-s} \left\|w \right\|_{\frac{1}{2},s-1} \left\| \theta\right\|_{\frac{1}{2},-s}\displaystyle\sum_{q\geq -1}c_q2^{-q(s-\frac{1}{2})}  \displaystyle\sum_{|j-q|\leq N_0} c_j^2\\
 &\lesssim \left\|\nabla_h \eta\right\|_{0,1-s} \left\|w \right\|_{\frac{1}{2},s-1} \left\| \theta\right\|_{\frac{1}{2},-s}.
\end{align*}
For the second term we proceed as follows
\begin{align*}
L_7^{(2)} &\leq \left\| \theta \right\|_{\frac{1}{2},-s}  \displaystyle\sum_{q\geq -1}c_q2^{-q(s-\frac{1}{2})}  \displaystyle\sum_{|j-q|\leq N_0}2^{\frac{1}{2}(j-q)}\big(\left\|\Delta_{j}^v w \right\|_{L^4_hL^2_v}2^{j(s-1)}\big)\big(2^{j(1-s-\frac{1}{2})}\left\|S_{j-1}^v(\nabla_h\eta) \right\|_{L^2_hL^{\infty}_v}\big)\\
 &\leq \left\|\nabla_h \eta\right\|_{B^{0,1-s-\frac{1}{2}}_{\infty,2}} \left\|w \right\|_{\frac{1}{2},s-1} \left\| \theta\right\|_{\frac{1}{2},-s}\displaystyle\sum_{q\geq -1}c_q2^{-q(s-\frac{1}{2})}  \displaystyle\sum_{|j-q|\leq N_0} c_j^2\\
 &\lesssim \left\|\nabla_h \eta\right\|_{0,1-s} \left\|w \right\|_{\frac{1}{2},s-1} \left\| \theta\right\|_{\frac{1}{2},-s}, 
\end{align*}
where we used the embedding $H^{0,1-s}\hookrightarrow B^{0,1-s-\frac{1}{2}}_{\infty,2}$ and the fact that $1-s-\frac{1}{2}<0$.\\
For the last term we proceed as follows
\begin{align*}
L_7^{(3)} &\leq  \left\| \theta \right\|_{\frac{1}{2},-s} \displaystyle\sum_{q\geq -1}c_q2^{-q(s-\frac{1}{2})}\displaystyle\sum_{i\in\{0,-1,1\}}  \displaystyle\sum_{j\geq q-N_0}\left\| \Delta_j^v w \right\|_{L^4_hL^2_v}\left\|\Delta_{j+i}^v(\nabla_h\eta) \right\|_{L^2}\\
&\leq \left\|\nabla_h \eta\right\|_{0,1-s} \left\|w \right\|_{\frac{1}{2},s-1} \left\| \theta\right\|_{\frac{1}{2},-s}\displaystyle\sum_{i\in\{0,-1,1\}} \displaystyle\sum_{q\geq -1}c_q2^{-q(s-\frac{1}{2})}  \displaystyle\sum_{j\geq q-N_0} c_jc_{j+i}\\
 &\lesssim \left\|\nabla_h \eta\right\|_{0,1-s} \left\|w \right\|_{\frac{1}{2},s-1} \left\| \theta\right\|_{\frac{1}{2},-s}.
\end{align*}
\noindent $\bullet$~\textbf{$L_8$ estimate }\\
We write $L_8=L_8^{(1)}+L_8^{(2)}+L_8^{(3)}$, where
\begin{align*}
L_8^{(1)}\bydef\displaystyle\sum_{q\geq -1}2^{-2qs}\displaystyle\sum_{|j-q|\leq N_0}\langle \Delta_q^v(\Delta_j^vw^3S_{j-1}^v(\partial_3 \eta)),\Delta_q^v\theta \rangle,
\end{align*}
\begin{align*}
L_8^{(2)}\bydef \displaystyle\sum_{q\geq -1}2^{-2qs}\displaystyle\sum_{|j-q|\leq N_0}\langle \Delta_q^v(S_{j-1}^v(w^3)\Delta_j^v\partial_3 \eta),\Delta_q^v \theta\rangle,
\end{align*}
\begin{align*}
L_8^{(3)}\bydef\displaystyle\sum_{q\geq -1}2^{-2qs}\displaystyle\sum_{i\in\{0,-1,1\}}\displaystyle\sum_{j\geq q-N_0}\langle \Delta_q^v(\Delta_j^vw^3\Delta_{j+i}^v(\partial_3 \eta)),\Delta_q^v\theta \rangle .
\end{align*}
Then, for the first term, we have
\begin{align*}
L_8^{(1)}&\leq \left\|\theta \right\|_{\frac{1}{2},-s}\displaystyle\sum_{q\geq -1}c_q 2^{-qs}\displaystyle\sum_{|j-q|\leq N_0}\left\|\Delta_j^vw^3 \right\|_{L^2} \left\| S_{j-1}^v(\partial_3 \eta)\right\|_{L^4_hL^{\infty}_v}.
\end{align*}
Now we use
\begin{align*}
\left\| S_{j-1}^v(\partial_3 \eta)\right\|_{L^4_hL^{\infty}_v}&\leq  2^j c_j 2^{-j(1-s-\frac{1}{2})}\left\|\eta \right\|_{H^{\frac{1}{2}}_h({B^{1-s-\frac{1}{2}}_{\infty,2}})_v}\\
&\lesssim 2^j c_j 2^{-j(1-s-\frac{1}{2})}\left\|\eta \right\|_{H^{\frac{1}{2},1-s}},
\end{align*}
and
\begin{align*}
\left\|\Delta_j^vw^3 \right\|_{L^2}&\leq 2^{-j}c_j2^{-j(s-1)}\left\|\nabla_h w \right\|_{H^{0,s-1}}.
\end{align*}
Therefore we find
\begin{align*}
L_8^{(1)}&\leq \left\|\theta \right\|_{\frac{1}{2},-s}\left\|\nabla_h w \right\|_{H^{0,s-1}}\left\|\eta \right\|_{H^{\frac{1}{2},1-s}}\displaystyle\sum_{q\geq -1}c_q 2^{q(\frac{1}{2}-s)}\displaystyle\sum_{|j-q|\leq N_0}c_j^22^{\frac{1}{2}(j-q)}\\
&\lesssim\left\|\theta \right\|_{\frac{1}{2},-s}\left\|\nabla_h w \right\|_{H^{0,s-1}}\left\|\eta \right\|_{H^{\frac{1}{2},1-s}}.
\end{align*}
Next, for the second term
\begin{align*}
L_8^{(2)}&\leq \left\|\theta \right\|_{\frac{1}{2},-s}\displaystyle\sum_{q\geq -1}c_q \displaystyle\sum_{|j-q|\leq N_0}2^{-(q-j)s}\left\|S_{j-1}^vw^3 \right\|_{L^2_hL^{\infty}_v}2^{j(1-s)}\left\| \Delta_j^v(\eta)\right\|_{L^4_hL^{2}_v}\\
&\leq \left\|\theta \right\|_{\frac{1}{2},-s}\left\|w^3 \right\|_{0,s}\left\|\eta \right\|_{\frac{1}{2},1-s}\displaystyle\sum_{q\geq -1}c_q \displaystyle\sum_{|j-q|\leq N_0}c_j\\
&\leq \left\|\theta \right\|_{\frac{1}{2},-s}\left\|w^3 \right\|_{0,s}\left\|\eta \right\|_{\frac{1}{2},1-s}\displaystyle\sum_{|j|\leq N_0}\displaystyle\sum_{q\geq -1}c_q c_{j+q}\\
&\lesssim \left\|\theta \right\|_{\frac{1}{2},-s}\big(\left\|w^3 \right\|_{0,s-1}+\left\|\nabla_hw \right\|_{0,s-1}\big)\left\|\eta \right\|_{\frac{1}{2},1-s}.
\end{align*}
For $L_8^{(3)}$ we have
\begin{align*}
L_8^{(3)}&\leq \left\|\theta \right\|_{\frac{1}{2},-s}\displaystyle\sum_{q\geq -1}c_q 2^{-qs}2^{q/2}\displaystyle\sum_{i\in{0,-1,1}}\displaystyle\sum_{j\geq q-N_0}\left\|\Delta_j^vw^3 \right\|_{L^2} \left\| \Delta_{j+i}^v(\partial_3 \eta)\right\|_{L^4_hL^{2}_v}\\
&\leq \left\|\theta \right\|_{\frac{1}{2},-s}\displaystyle\sum_{q\geq -1}c_q 2^{q(\frac{1}{2}-s)}\displaystyle\sum_{i\in{0,-1,1}}\displaystyle\sum_{j\geq q-N_0}\left\|\Delta_j^v\nabla_hw \right\|_{L^2} \left\| \Delta_{j+i}^v\eta\right\|_{L^4_hL^{2}_v}\\
&\leq \left\|\theta \right\|_{\frac{1}{2},-s}\displaystyle\sum_{q\geq -1}c_q 2^{q(\frac{1}{2}-s)}\displaystyle\sum_{i\in{0,-1,1}}\displaystyle\sum_{j\geq q-N_0}2^{j(s-1)}\left\|\Delta_j^v\nabla_hw \right\|_{L^2} 2^{j(1-s)} \left\| \Delta_{j+i}^v\eta\right\|_{L^4_hL^{2}_v}\\
&\lesssim \left\|\theta \right\|_{\frac{1}{2},-s}\left\|\nabla_hw \right\|_{0,s-1}\left\|\eta \right\|_{\frac{1}{2},1-s}.
\end{align*}
Finally, for the last term, we use the fact that $s\leq 1$ which implies that $2s-1\leq 1$, and that $w$ is a free divergence vector field to infer that
\begin{align*}
L_9&=2^{-2(s-1)}\langle S_0^v \theta ,S_0^v w^3\rangle + \displaystyle\sum_{q\geq 0} 2^{2q(s-1)} \langle \Delta_q^v \theta , \Delta_q^v w^3 \rangle\\
& \lesssim \norm {S_0^v\theta}_{L^2}\norm {S_0^vw}_{L^2}+\displaystyle\sum_{q\geq 0}2^{-qs}\norm {\Delta_q^v \theta}_{L^2} 2^{q(s-1)} 2^{q(2s-1)} \norm {\Delta_q^v w^3}_{L^2}\\
&\lesssim \norm {S_0^v\theta}_{L^2}\norm {S_0^vw}_{L^2}+\displaystyle\sum_{q\geq 0}2^{-qs}\norm {\Delta_q^v \theta}_{L^2} 2^{q(s-1)} \norm {\Delta_q^v \nabla_h \cdot w^h}_{L^2}\\
&\lesssim \norm {\theta}_{0,-s}\big(\norm {\nabla_h w}_{0,s-1}+ \norm {w}_{0,s-1} \big).
\end{align*}
\end{proof}
\noindent In the case where $s= \frac{1}{2}$, the estimates are more delicate since the space $H^\frac{1}{2}(\mathbb{R})$ is not an algebra. In the present paper, and in this particular case, we will just take up again the reasoning due to M. Paicu in \cite{Paicu2}, and use his estimates to treat all the equations in the same energy space of $H^{0,-\frac{1}{2}}$. \\ More precisely one may prove the following proposition:
\begin{prop} \label{prop2}
Let $u,v,\rho$ and $\eta$ be in space $L^{\infty}_T(H^{0,\frac{1}{2}})$ with $\nabla_h u, \nabla_h v, \nabla_h \rho,\nabla_h \eta$ in $L^{2}_T(H^{0,\frac{1}{2}})$ and $u,v$ two divergence free vector fields. Let $w,\theta$ be in $L^{\infty}_T(H^{0,\frac{1}{2}})$ with $\nabla_hw$ and $\nabla_h\theta$ in $L^{\infty}_T(H^{0,\frac{1}{2}})$ solution to the following equations
\begin{equation}\label{equa-unicité}
 \left\{\begin{array}{l}
\partial_t w+u\cdot\nabla w -\Delta_hw+\nabla \varpi=\theta e_3-w\cdot\nabla v,\\
\partial_t \theta+u\cdot\nabla\theta -\Delta_h\theta =-w\cdot\nabla \eta,\\
\div w=0.\\
\end{array}\right.,
\end{equation}
Let $\chi(t)\bydef \norm {w(t)}_{0,-\frac{1}{2}}^2+ \norm {\theta(t)}_{0,-\frac{1}{2}}^2$. If $\chi(t) \leq e^{-2}$
 , for all $0<t<T$, then we have:
\begin{equation}\label{cases=1/2}
\frac{d}{dt}\chi(t)\leq C f(t)\chi(t) \big(1- ln \chi(t) \big)ln\big(1- ln \chi(t)\big),
\end{equation}
where $f$ is a locally integrable function depending on the norms of $u,v,w,\rho,\eta,\theta$ in $H^{0,\frac{1}{2}}\cap H^{1,\frac{1}{2}}$.  
\end{prop}
\begin{proof}
As mentioned before, the strategy is similar to the case $s>\frac{1}{2}$   but  it needs more attention because of the lack of the embedding $H^\frac{1}{2}(\mathbb{R})\hookrightarrow L^\infty(\mathbb{R})$.  
The proof we propose here is a direct application of the estimates proved in \cite{Paicu2}. In particular, note  that the terms $u\cdot \nabla w$ and $u\cdot \nabla\theta$ (resp. $w \cdot \nabla v$ and $w\cdot \nabla \eta)$ can be treated along the same way since we assume here that $u$ and $\rho$ (resp. $v$ and $\eta$) have the same regularity. Some details are given below.\\
Let us recall the following estimate from the proof of Lemma 4.2 from \cite{Paicu2}:
\begin{align}\label{unicité-estimate1}
\sum_{q\geq -1} 2^{-q}  \bigg(\langle \Delta_q^v (u \cdot \nabla w), \Delta_q^v w  \rangle &+\langle \Delta_q^v (w \cdot \nabla v), \Delta_q^v w  \rangle  \bigg) \leq  \frac{1}{50}  \norm {\nabla_hw}_{H^{0,-\frac{1}{2}}}^2  \nonumber \\
&+Cf_1(t)\norm w_{H^{0,-\frac{1}{2}}}^2 (1-  \norm w_{H^{0,-\frac{1}{2}}}^2) ln (1- ln \norm w_{H^{0,-\frac{1}{2}}}^2)),
\end{align}
where 
$$f_1   \bydef \big(1+\norm u_{1,\frac{1}{2}}^2 + \norm v_{1,\frac{1}{2}}^2 + \norm w_{1,\frac{1}{2}}^2 \big) \times \big(1+\norm {\nabla_h u}_{1,\frac{1}{2}}^2 + \norm {\nabla_h v}_{1,\frac{1}{2}}^2 + \norm {\nabla_h w}_{1,\frac{1}{2}}^2\big).$$
Similar arguments can be used to establish the following estimate for the the second equation in \eqref{equa-unicité}
\begin{align}\label{unicité-estimate2}
\sum_{q\geq -1} 2^{-q}  \bigg(\langle \Delta_q^v (u \cdot \nabla \theta), \Delta_q^v \theta  \rangle &+\langle \Delta_q^v (\theta \cdot \nabla \eta), \Delta_q^v \theta  \rangle  \bigg) \leq  \frac{1}{50}  \norm {\nabla_h\theta}_{H^{0,-\frac{1}{2}}}^2  \nonumber \\
&+Cf_2(t)\norm \theta_{H^{0,-\frac{1}{2}}}^2 (1-  \norm \theta_{H^{0,-\frac{1}{2}}}^2) ln (1- ln \norm \theta_{H^{0,-\frac{1}{2}}}^2)),
\end{align}
where 
$$f_2  \bydef \big(1+\norm u_{1,\frac{1}{2}}^2 + \norm \eta_{1,\frac{1}{2}}^2 + \norm \theta_{1,\frac{1}{2}}^2 \big) \times \big(1+\norm {\nabla_h u}_{1,\frac{1}{2}}^2 + \norm {\nabla_h \eta}_{1,\frac{1}{2}}^2 + \norm {\nabla_h \theta}_{1,\frac{1}{2}}^2\big).$$
Both estimates \eqref{unicité-estimate1} and \eqref{unicité-estimate2} hold under the assumption $\chi(t)= \norm {w(t)}_{0,-\frac{1}{2}}^2+ \norm {\theta(t)}_{0,-\frac{1}{2}}^2 \leq e^{-2}.$
Finally, the estimate of $\theta e_3$ is the following
\begin{equation}\label{unicité-force term}
\sum_{q\geq -1} 2^{-q}   \langle \Delta_q^v  \theta e_3, \Delta_q^v w  \rangle \leq \norm w_{H^{0,-\frac{1}{2}}} \norm \theta_{H^{0,-\frac{1}{2}}} \leq \norm w_{H^{0,-\frac{1}{2}}}^2+ \norm \theta_{H^{0,-\frac{1}{2}}}^2. 
\end{equation} 
On the other hand, for $0<x\ll1$, the function $x \longmapsto x(1-ln(x)) ln(1-ln(x))$ is non-decreasing. 
It is then easy to deduce \eqref{cases=1/2} from  \eqref{unicité-estimate1}, \eqref{unicité-estimate2} and \eqref{unicité-force term}  by setting 
$$f(t)\bydef f_1(t)+ f_2(t) + 1.$$
\end{proof}
\section{Proof of the main Theorems }

\subsection{Proof of Theorem \ref{th1}} 
\begin{proof}[]
Let $(u,\rho,P),(v,\eta,\Pi)$ be two solutions for system \eqref{NSB_h}, and $w\bydef u-v,\theta\bydef\rho-\eta,\varpi\bydef P-\Pi$ denote the difference functions. Then $(w,\theta,\varpi)$ satisfies 
\begin{equation}\label{mathcal{Q}}
\left\{\begin{array}{l}
\big(\partial_t+u\cdot\nabla  \big)w -\Delta_hw+\nabla \varpi=\theta e_3-w\cdot\nabla v,\\
\big(\partial_t+u\cdot\nabla  \big)\theta -\Delta_h\theta =-w\cdot\nabla \eta,\\
\div w=0,\\
w_{|t=0}=\theta_{|t=0}=0. \tag{$\mathcal{Q}$}
\end{array}\right.
\end{equation} 
\noindent$\bullet$~\textit{\textbf{The case: $s\neq \frac{1}{2}$}}\\
In the sequel, the constant $C$ denotes a universal constant which is allowed to differ from line to line.\\
Recall first that, by interpolation, we have:
\begin{equation*}
\norm {w}_{\frac{1}{2},s-1}\lesssim \norm {w}_{0,s-1}^{\frac{1}{2}} \norm {\nabla_h w}_{0,s-1}^{\frac{1}{2}},
\end{equation*}
\begin{equation*}
\norm {\theta}_{\frac{1}{2},-s}\lesssim \norm {\theta}_{0,-s}^{\frac{1}{2}} \norm {\nabla_h \theta}_{0,-s}^{\frac{1}{2}}.
\end{equation*}
By using the above interpolation result with the estimates from Proposition \eqref{proposition1}, we infer that
\begin{equation}\label{ES1}
L_1+ L_5 \lesssim \norm u _{\frac{1}{2},s} \norm {\nabla_h w}_{0,s-1}^\frac{3}{2} \norm w_{0,s-1}^\frac{1}{2},
\end{equation}
\begin{align}\label{ES2}
\sum_{i\in\{2,3,6,7\}}L_i \lesssim \big( \norm{\nabla_h u }_{0,s } +\norm{\nabla_h v }_{0, s} +\norm{\nabla_h \eta }_{0, 1-s} \big)&\big( \norm  w _{0,s-1} + \norm \theta_{0,-s} \big)\nonumber\\& \times\big( \norm {\nabla_h w} _{0,s-1} + \norm {\nabla_h \theta}_{0,-s} \big), 
\end{align}
\begin{align}\label{ES3}
L_4 + L_8 \lesssim \big( \norm{ v }_{\frac{1}{2}, s}& +\norm{ \eta }_{\frac{1}{2},1-s}  \big) \bigg[ \big( \norm w_{0,s-1} + \norm \theta_{0,-s} \big)^\frac{3}{2} \big( \norm {\nabla_h w}_{0,s-1}+ \norm {\nabla_h w}_{0,s-1}\big)^\frac{1}{2}\nonumber\\
&  +  \big( \norm w_{0,s-1} + \norm \theta_{0,-s} \big)^\frac{1}{2} \big( \norm {\nabla_h w}_{0,s-1}+ \norm {\nabla_h w}_{0,s-1}\big)^\frac{3}{2} \bigg].
\end{align} 
Finally, we recall the estimate of $L_9$
\begin{equation}\label{ES4}
L_9 \lesssim \norm {\theta}_{0,-s} \norm {\nabla_h w}_{0,s-1}+ \norm {\theta}_{0,-s} \norm {w}_{0,s-1}.
\end{equation}
An easy consequence of the Young inequality tells that, for any non negative real numbers $\alpha, \beta,A,B,D$ with $\alpha+ \beta=2$  , we have

\begin{equation*}
DA^{\alpha}B^{\beta} \leq \frac{1}{100} B^2 + C A^2 D^{2/\alpha}.
\end{equation*}
 By a suitable choice of $\alpha$, $\beta$ in each one of the estimates \eqref{ES1},\eqref{ES2}, \eqref{ES3} and \eqref{ES4}, we infer that
\begin{equation}
\sum_{i=1}^9 L_i \leq \frac{1}{2} \big(\norm {\nabla_h w}_{0,s-1}^2 +  \norm {\nabla_h \theta}_{0,-s}^2\big)+ Cf(t)\big(\norm {w}_{0,s-1}^2 +  \norm {\theta}_{0,-s}^2\big),\label{eq27}
\end{equation}
where $f$ is a function, locally integrable in time \footnote{Remark that the assumption on $u,v$ being $L^\infty_T(H^{0,s)} \cap L^2_T(H^{1,s})$ is enough to ensure, by interpolation argument, that $u$ and $v$ belong to $L^4_T(H^{\frac{1}{2} ,s}). $},   given by:
\begin{equation*}
f = \norm {u}_{\frac{1}{2},s}^4+\norm {\nabla_h u}_{0,s}^2+\norm {\nabla_h v}_{0,s}^2 +\norm {\nabla_h \eta}_{0,1-s}^2 +\norm {v}_{\frac{1}{2},s}^{\frac{4}{3}}+\norm {\eta}_{\frac{1}{2},1-s}^\frac{4}{3} +\norm {v}_{\frac{1}{2},s}^4+\norm {\eta}_{\frac{1}{2},s}^4+1 .
\end{equation*}
On the other hand, by applying the operator $\Delta_q^v$ to \eqref{mathcal{Q}}, and by summing with respect to 
$q\in\mathbb{N}\cup \{-1\}$, then \eqref{eq27} leads to
$$
\norm {w(t)}_{0,s-1}^2+ \norm {\theta(t)}_{0,-s}^2 \leq \displaystyle C\int_{0}^t f(\tau)\big(\norm {w(\tau)}_{0,s-1}^2+ \norm {\theta(\tau)}_{0,-s}^2\big)  d\tau.
$$
Finally, we can conclude by using Gronwall's Lemma.\\

\noindent$\bullet$~\textit{\textbf{The case: $s= \frac{1}{2}$}}\\
\noindent The uniqueness in this case can be deduced by applying the Osgood's Lemma to the estimate \eqref{cases=1/2} given in Proposition \ref{prop2}.
\end{proof}
\subsection{Proof of Theorem \ref{th2}}
\begin{proof}[]\\
\noindent The uniqueness result in Theorem \ref{th2} is a direct consequence of Theorem \ref{th1} when we take $s=1$. 
Thus, we only have to prove the existence of a global solution $(u,\rho)$ for ($NSB_h$). The arguments given hereafter, based on the Friedrich's method, are very classical. (see for instance \cite{Chemin1,Paicu1,Paicu2,Danchin1} for more details)\\ 
  
\noindent For $n\in\mathbb N$, we consider the following approximate system
\begin{equation}\label{B_n}
\left\{\begin{array}{l}
\partial_t u_n+ \mathbb{E}_n (u_n\cdot\nabla u_n) -\Delta_hu_n +\nabla P_n =\rho_n  e_3,\\
\partial_t \rho_n + \mathbb{E}_n (u_n \cdot\nabla \rho_n)  -\Delta_h\rho_n  =0,\\
\div u_n=0,\\
P_n=  \mathbb{E}_n \sum_{k,j} (-\Delta)^{-1} \partial_j\partial_k(u_n^j u_n^k),\\
(u_n,\rho_n)_{|t=0}=(\mathbb{E}_nu_0,\mathbb{E}_n\rho_0),\\
\end{array}\right. \tag{$B_n$}
\end{equation}
 
\noindent where $\mathbb{E}_n$ denotes the cut-off operator defined on $L^2(\mathbb{R}^3)$ by 
$\mathbb{E}_n u \bydef \mathcal{F}^{-1} (\mathds{1}_{B(0,n)} \hat{u})$. \\ 
It is then easy to see by using a fixed point argument that there exists some $T_n>0$ for which  
\eqref{B_n} admits a unique solution $(u_n,\rho_n)\in\mathcal{C}^{\infty}([0,T_n[, \mathcal{L}^{2,\sigma}_n)$, where 
$$\mathcal{L}^{2,\sigma}_n\bydef L^{2,\sigma}_n\times L^{2}_n(\mathbb{R}^3),$$
$$
 L^{2,\sigma}_n  \bydef \big\{ v\in \big(L^2(\mathbb{R}^3)\big)^3  : \div v = 0 \; and \; Supp\; (\hat{v}) \subset B(0,n) \big\},
$$
$$L^{2}_n(\mathbb{R}^3)\bydef \big\{ \rho\in L^2(\mathbb{R}^3) : \text{Supp }\; (\hat{\rho}) \subset B(0,n) \big\}.$$
Moreover, because $u_n$ and $\rho_n$ are regular, we can multiply the first equation in \eqref{B_n} by $u_n$ and the second one by $\rho_n$. Then, for $t\in [0,T_n[$, after integrating the corresponding terms over $[0,t[\times\mathbb R^3$, we obtain the classical uniform $L^2$-energy bounds 
\begin{align}
&\norm {u_n(t)}_{L^2}^2 +  2\displaystyle\int_0^t \norm {\nabla_h u_n(\tau)}_{L^2}^2d\tau\leq 2(\norm {u_0}_{L^2}^2+t^2  \norm {\rho_0}_{L^2}^2),\label{uL2energy} \\ 
&\norm {\rho_n(t)}_{L^2}^2 +  2\displaystyle\int_0^t \norm {\nabla_h \rho_n(\tau)}_{L^2}^2d\tau\leq \norm {\rho_0}_{L^2}^2.\label{rhoL2energy}
\end{align}
Hence, $(u_n,\rho_n)$ is a global solution, that is for any $T>0,\;  (u_n,\rho_n)\in\mathcal{C}^{\infty}([0,T[, \mathcal{L}^{2,\sigma}_n)$ and it satisfies \eqref{B_n} on $[0,T[\times\mathbb R^3$. Moreover, we may extract a 
sub-sequence, still denoted $(u_n,\rho_n)$, such that
$$(u_n,\rho_n) \overset{*}{\rightharpoonup} (u,\rho) \quad \text{in } L^\infty_TL^2\cap L^2_TH^{1,0}.$$
However, in order to pass to the limit in the non-linear terms, we will need some strong convergence property. To this end we can use the Proposition 3.2 established in \cite{Miao}. We obtain
$$\|u_n(t)\|_{H^1}^2 +  \displaystyle\int_0^t \norm {\nabla_h u_n(\tau)}_{H^1}^2d\tau\leq C_0e^{C_0t}.$$
It follows that $u_n$ is uniformly bounded in $L^\infty_TH^1\cap 
L^2_T(H^{1,1}\cap H^{2,0})$. \\  
Assume temporarily that
\begin{equation}\label{injection}
\big( L^{\infty}_TL^2 \cap L^2_T H^{1,0}\big)  \cap \big(L^{\infty}_TH^{0,1} \cap L^2_T H^{1,1} \big) \hookrightarrow L^4_TL^{2}_v(L^4_h) \cap L^4_T L^{\infty}_v(L^4_h) .
\end{equation}
We infer that
 \begin{itemize}
 \item $(u_n)$ is bounded in $ L^4_TL^{2}_v(L^4_h) \cap L^4_T L^{\infty}_v(L^4_h) .$
 \item $(\rho_n)$ is bounded in $L^{\infty}_TL^2 \cap L^2_T H^{1,0} \hookrightarrow L^4_TL^{2}_v(L^4_h)$.
 \end{itemize}
Hence, $\nabla P_n$, $\div(u_n\rho_n)$ and $\div(u_n \otimes u_n)$ are bounded in $L^2_TH^{-1}$, and this gives a bound for $\partial_t u_n$ and $\partial_t \rho_n$  in $L^2_T H^{-1}$. We can then use Aubin-Lions Theorem in order to extract a new sub-sequence (still denoted $(u_n,\rho_n)$) that strongly converges to $(u,\rho)$ in $L^2_TH^{\frac{1}{2}}_{loc} \times L^2_T(H^{-\frac{1}{2}}_{loc})$. Now, we can pass to the limit $n\to\infty$ in all the terms in \eqref{B_n}, and we show that 
$(u,\rho)$ is a solution to $\eqref{NSB_h}$. \\ 

\noindent In order to be more convenient, we briefly outline some details about how to pass to the limit in the non linear term in the $\rho$-equation: Let $\psi,\zeta$ be smooth functions in $C_c^\infty(\mathbb{R}^+ \times\mathbb{R}^3)$, supported respectively in $K\Subset   \mathbb{R}^+ \times\mathbb{R}^3$, and $\tilde{K}\Subset   \mathbb{R}^+ \times\mathbb{R}^3$, with $K $ being strictly included in $\tilde{K}$ and $\zeta|_{K}=1$, we claim 
\begin{equation}\label{compactness-limite}
\lim_{n \rightarrow \infty} \int_{(0,T)\times \mathbb{R}^3} u_n \rho_n \cdot \nabla \psi = \int_{(0,T)\times \mathbb{R}^3} u  \rho  \cdot \nabla \psi.
\end{equation}
 According to the properties of $\psi$ and $\zeta$ we have
 $$\int_{(0,T)\times \mathbb{R}^3} u_n \rho_n \cdot \nabla \psi  = \int_{\tilde{K} } (\zeta u_n) (\rho_n \cdot \nabla \psi). $$
 We recall that, from the uniform bounds proved above, we have for $s>\frac{1}{2}$
 \begin{itemize}
 \item $(u_n)$ is bounded in $L^2_T(H^s)$ and $(\partial_tu_n)$ is bounded in $L^2_T(H^{-1})$.
  \item $(\rho_n)$ is bounded in $L^2_T(L^2)$ and $(\partial_t\rho_n)$ is bounded in $L^2_T(H^{-1})$.
 \end{itemize}
 hence, by simple product laws we obtain the same bounds for $(\zeta u_n )$ and $(\rho _n \cdot \nabla \psi)$ as $ (u_n)$ and $(\rho_n)$ respectively. Therefore, by combining the Aubin-Lions Theorem, Theorem 2.94 from \cite{Chemin1} \footnote{Theorem 2.94 from \cite{Chemin1} gives in fact a more general result in the Besov context, what we would apply here is a particular case which says that the multiplication operator by a smooth function is a compact operator from $H^s$ into $H^{s-\varepsilon}$, for any $s\in \mathbb{R}$ and $\varepsilon>0$.} and the weak compactness property of the Hilbert spaces, we infer that 
 \begin{itemize}
 \item $(\zeta u_n )$ converges strongly to $(\zeta u  )$ in $L^2_T(H^\frac{1}{2})$,
 \item $(\rho_n   \cdot \nabla \psi )$ converges strongly to $(\rho   \cdot \nabla \psi )$ in $L^2_T(H^{-\frac{1}{2}})$.
 \end{itemize}
 This should be enough to justify assertion \eqref{compactness-limite}.\\
 
\noindent To complete our proof, we shall justify \eqref{injection}. To this end, we will prove a more general inequality: Let $f\in \mathcal{S}'(\mathbb{R}^3)$ and $s>\frac{1}{2}$ (note that in inequality \eqref{injection} $s$ is equal to $1$). By using Lemma \ref{ber}, we obtain for some non-negative number $N$ to be fixed later
\begin{align*}
 \norm f_{L^4_h(L^{\infty}_v)}&\leq \sum_{k\geq -1} \sum_{j\geq -1}\norm {\Delta_k^h \Delta_j^v f}_{L^4_h(L^{\infty}_v)}\\
 &\leq  \sum_{N\geq k\geq -1} \sum_{j\geq -1} 2^{\frac{k}{2}} 2^{j(\frac{1}{2}-s)}2^{js}\norm {\Delta_k^h \Delta_j^v f}_{L^2} + \sum_{k \geq N+1} \sum_{j\geq -1} 2^{\frac{-k}{2}}2^{j(\frac{1}{2}-s)}2^{js}\norm { \nabla_h \Delta_k^h \Delta_j^v f}_{L^2}.
\end{align*}
Hence, because $s>\frac{1}{2}$ and $j\geq -1$, the Cauchy-Schwarz inequality gives
\begin{align*}
\norm f_{L^4_h(L^{\infty}_v)}&\lesssim 2^{\frac{N}{2}} \norm f_{L^2_h(H^s_v)} + 2^{-\frac{N}{2}} \norm {\nabla_hf}_{L^2_h(H^s_v)} .
\end{align*}
Therefore, by choosing $N$ such that $2^N= \frac{\norm {\nabla_hf}_{L^2_h(H^s_v)}}{\norm f_{L^2_h(H^s_v)}}$, we obtain
$$
\norm f_{L^4_h(L^{\infty}_v)} \lesssim \norm f_{L^2_h(H^s_v)} ^{\frac{1}{2}} \norm {\nabla_hf}_{L^2_h(H^s_v)}^{\frac{1}{2}}.
$$
On the other hand, the Minkowski inequality ensures that $L^4_h(L^{\infty}_v) \hookrightarrow L^{\infty}_v(L^4_h)
$. This is sufficient to conclude the proof of \eqref{injection}. 
\end{proof}
\appendix
\section{Appendix}
In this additional section we prove a result of well posedness for \eqref{NSB_h} under some smallness conditions on: $T$, the norm of $u_0$ in $H^{0,s}(\mathbb{R}^3)$ and the norm of $\rho_0$ in $L^2(\mathbb{R}^3)$. This result may not be optimal in this direction, but it gives the existence of solutions in a some new\footnote{We have already treated in Theorem \ref{th2} some particular situations with axisymmetric initial data.} situations where the proof of the uniqueness part is reduced to an application of Theorem \ref{th1}.
\begin{theo}{(\textbf{Local well-posedness)}} \label{th3} \\
Let $s\in]\frac{1}{2},1]$ , $\delta\in [0,s]$ and $(u_0,\rho_0)\in H^{0,s} \times H^{0,\delta}$. We have:
\begin{itemize}
\item There exists $C_s> 0$ such that if $$\norm {u_0}_{0,s}^2+ T\norm{\rho_0}_{L^2}\bigg(\norm{u_0}_{L^2}+\norm{\rho_0}_{L^2}\big(1+\frac{T}{2}\big) \bigg)< C_s^2.$$ then \eqref{NSB_h} has at least one solution $(u,\rho)$ in $\mathcal{X}^{s,\delta}(T)$, where:
$$\mathcal{X}^{s,\delta}(T)\bydef L^{\infty}_TH^{0,s}\cap L^{2}_TH^{1,s}\times L^{\infty}_TH^{0,\delta}\cap L^{2}_TH^{1,\delta}.$$
\item The solution is unique if $\delta \geq 1-s$.
\end{itemize} 
\end{theo}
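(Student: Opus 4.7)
The plan is to construct $(u,\rho)$ via the Friedrichs approximation scheme already used in the proof of Theorem~\ref{th2}. Consider the approximate system $(NSB_h)_n$ with cut-off $\mathbb{E}_n$; a fixed-point argument yields a local smooth solution $(u_n,\rho_n)\in\mathcal{C}^\infty([0,T_n[,\mathcal{L}^{2,\sigma}_n)$, and the $L^2$ energy estimates \eqref{uL2energy}--\eqref{rhoL2energy} extend it globally and provide the uniform bounds
\[
\|u_n(t)\|_{L^2}\le \sqrt{2}\bigl(\|u_0\|_{L^2}+t\|\rho_0\|_{L^2}\bigr),\qquad \|\rho_n(t)\|_{L^2}\le \|\rho_0\|_{L^2}.
\]
The heart of the argument is a uniform bound on $(u_n,\rho_n)$ in $\mathcal{X}^{s,\delta}(T)$ obtained through anisotropic energy estimates combined with a continuation argument based on the smallness of the data.

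For the $H^{0,s}$ estimate on $u_n$, apply $\Delta_q^v$ to the velocity equation, take the scalar product with $\Delta_q^v u_n$ and sum over $q\ge-1$ with weight $2^{2qs}$. The convective term $\langle u_n\cdot\nabla u_n,u_n\rangle_{0,s}$ is handled by the same Bony/Littlewood--Paley techniques as $L_1$--$L_2$ of Proposition~1 (using $\partial_3 u_n^3=-\nabla_h\cdot u_n^h$ to turn vertical derivatives into horizontal ones), and after the interpolation $\|u_n\|_{1/2,s}\lesssim \|u_n\|_{0,s}^{1/2}\|\nabla_h u_n\|_{0,s}^{1/2}$ it yields a cubic bound $C\|u_n\|_{0,s}\|\nabla_h u_n\|_{0,s}^2$. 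The main obstacle is the buoyancy term $\langle\rho_n,u_n^3\rangle_{0,s}$, since $\rho_n$ is only known to lie in $H^{0,\delta}$ with possibly $\delta<s$. The key trick is again to exploit $\partial_3 u_n^3=-\nabla_h\cdot u_n^h$: splitting the sum in $q$ into the block $q=-1$ and the range $q\ge 0$, Lemma~\ref{ber} gives $\|\Delta_q^v u_n^3\|_{L^2}\lesssim 2^{-q}\|\Delta_q^v\nabla_h u_n^h\|_{L^2}$ for $q\ge 0$, so that Cauchy--Schwarz and the elementary embedding $L^2\hookrightarrow H^{0,s-1}$ (valid since $s\le 1$) produce
\[
\langle\rho_n,u_n^3\rangle_{0,s}\lesssim \|\rho_n\|_{L^2}\|u_n\|_{L^2}+\|\rho_n\|_{L^2}\|\nabla_h u_n\|_{0,s}\le \tfrac14\|\nabla_h u_n\|_{0,s}^2+C\|\rho_n\|_{L^2}\bigl(\|u_n\|_{L^2}+\|\rho_n\|_{L^2}\bigr).
\]
Integration in time and substitution of the above $L^2$ bounds produce exactly the combination $CT\|\rho_0\|_{L^2}(\|u_0\|_{L^2}+\|\rho_0\|_{L^2}(1+T/2))$ featuring in the smallness hypothesis.

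A bootstrap on the largest time $T^\ast\le T$ on which $\|u_n\|_{L^\infty_t H^{0,s}}^2+\|\nabla_h u_n\|_{L^2_t H^{0,s}}^2\le 2C_0^2$ then closes the $H^{0,s}$ bound for $u_n$, provided $C_0$ is chosen so small that the cubic term is absorbed by the dissipation and the strict inequality persists, forcing $T^\ast=T$. A parallel $H^{0,\delta}$ estimate for $\rho_n$---trivial when $\delta=0$ and otherwise obtained by the anisotropic product rules of Section~3 applied to $\langle u_n\cdot\nabla\rho_n,\rho_n\rangle_{0,\delta}$ together with the now-established smallness of $u_n$ in $H^{0,s}$---completes the uniform bound in $\mathcal{X}^{s,\delta}(T)$. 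Compactness then follows exactly as in the proof of Theorem~\ref{th2}: $\partial_t u_n$ and $\partial_t\rho_n$ are bounded in $L^2_T H^{-N}$ for some large $N$, Aubin--Lions yields strong convergence of a subsequence in $L^2_{\mathrm{loc}}(L^2)$, and one passes to the limit in all nonlinear terms. Finally, when $\delta\ge 1-s$ the non-homogeneous embedding $H^{0,\delta}\hookrightarrow H^{0,1-s}$ (together with its $H^{1,\cdot}$ analogue) places $\rho$ in the functional framework of Theorem~\ref{th1}, which delivers uniqueness.
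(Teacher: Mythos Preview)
Your proposal is correct and follows essentially the same route as the paper: Friedrichs regularisation, the anisotropic $H^{0,s}$ energy estimate for $u_n$ with the cubic convective bound $C\|u_n\|_{0,s}\|\nabla_h u_n\|_{0,s}^2$ and the divergence-free trick $\partial_3 u_n^3=-\nabla_h\cdot u_n^h$ to reduce the buoyancy pairing to $\|\rho_n\|_{L^2}\|u_n\|_{L^2}+\|\rho_n\|_{L^2}\|\nabla_h u_n\|_{0,s}$, a continuation argument based on the smallness hypothesis, then the $H^{0,\delta}$ estimate for $\rho_n$, compactness, and Theorem~\ref{th1} for uniqueness. One small clarification: for the $\rho_n$ step the paper does not rely on the \emph{smallness} of $u_n$ but on the bound $\|\nabla_h u_n\|_{L^2_TH^{0,s}}+\|u_n\|_{L^4_TH^{1/2,s}}<\infty$ together with Gronwall, since the norms $\|u_n\|_{1,s}$ and $\|u_n\|_{1/2,s}$ appearing in the estimate of $\langle u_n\cdot\nabla\rho_n,\rho_n\rangle_{0,\delta}$ are not pointwise small; your sketch should be read in that sense.
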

\begin{rmq}
Notice that, according to Theorem \ref{th3}, the construction of solutions to \eqref{NSB_h} (at least locally in time) does not require high regularity for the initial density $\rho_0$. This is in fact possible due to the high vertical regularity of the velocity which allows us to rigorously justify the a priori estimates (as in the Friedrich's method explained in the proof of Theorem \ref{th2}). 
\end{rmq}
\begin{rmq}
The constant $C_s$ in Theorem \ref{th3} tends in fact to zero when $s$ tends to $\frac{1}{2}$. In this case, our a priori estimates fall, and hence the existence of solutions in the case $s=\frac{1}{2}$ remains open, even for the classical Navier-Stokes equations.
\end{rmq}
\begin{rmq}
In the case where $s=\frac{1}{2}$, one may prove a local well-posedness result if the initial data $(u_0,\rho_0)$ is in $\big(L^2_h(B^{\frac{1}{2}}_{2,1})_v\big)^2$. See for instance \cite{Paicu1} where this is established for $(NS_h)$.
\end{rmq}~~\\
\noindent The proof of the existence part in Theorem \ref{th3} in based on the following Lemma:
\begin{lemma} \label{lastlemma}
Let $s\in ]\frac{1}{2},1]$, $\delta\in [0,s]$, then for all regulars vector fields $a,b$ with $\div a=0$, we have
\begin{align}
\big|\left\langle a\cdot \nabla b , b \right\rangle_{0,\delta}\big| &\lesssim \norm b_{\frac{1}{2},\delta}\big(\norm a_{1,s} \norm b _{\frac{1}{2},\delta} + \norm a_{\frac{1}{2},s} \norm b _{1,\delta}\big). \label{eq33}\\ 
\big|\left\langle \rho,a^3 \right\rangle_{0,s}\big|&\leq \frac{1}{4} \norm {\rho}_{L^2} \norm {a}_{L^2} + \norm {\rho}_{L^2}\norm {\nabla_h a}_{0,s}. \label{eq34}
\end{align}
\end{lemma}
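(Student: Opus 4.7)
The plan is to split $a\cdot\nabla b = a^h\cdot\nabla_h b + a^3\partial_3 b$ in \eqref{eq33} and handle each piece by a different technique, while for \eqref{eq34} I would isolate the vertical low-frequency block and use the divergence-free condition to convert $a^3$ into $\nabla_h\cdot a^h$ on the high modes. The horizontal piece of \eqref{eq33} is straightforward by duality and Lemma~\ref{productrule1}, whereas the vertical piece is the main obstacle and requires repeating the Bony-decomposition analysis of the $L_2$ term in Proposition~1.

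For the horizontal contribution, duality between $H^{-1/2,\delta}$ and $H^{1/2,\delta}$ gives
\begin{equation*}
|\langle a^h\cdot\nabla_h b,b\rangle_{0,\delta}|\le\norm{a^h\cdot\nabla_h b}_{-1/2,\delta}\,\norm{b}_{1/2,\delta},
\end{equation*}
and Lemma~\ref{productrule1} applied with $(\sigma,\sigma')=(0,1/2)$---taking $\nabla_h b$ at the lower vertical level $\delta$ and $a^h$ at the higher level $s$---yields $\norm{a^h\cdot\nabla_h b}_{-1/2,\delta}\lesssim\norm{\nabla_h b}_{0,\delta}\,\norm{a}_{1/2,s}\le\norm{a}_{1/2,s}\,\norm{b}_{1,\delta}$. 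The hypotheses $s>1/2$, $\delta\le s$, $\delta+s\ge 0$ hold automatically from $s\in(1/2,1]$ and $\delta\in[0,s]$, and this produces the second term of the bound.

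The vertical contribution is the delicate part. I would essentially redo the $L_2$ analysis of Proposition~1 verbatim, with $u,w$ replaced by $a,b$ and the relation $s-1$ replaced by $\delta$. Namely, perform the vertical Bony decomposition
\begin{equation*}
a^3\partial_3 b = T^v_{a^3}(\partial_3 b)+T^v_{\partial_3 b}(a^3)+R^v(a^3,\partial_3 b),
\end{equation*}
and split the first paraproduct further into a diagonal term $S_q^v(a^3)\Delta_q^v(\partial_3 b)$, a correction $(S_q^v-S_{j-1}^v)(a^3)\Delta_j^v(\partial_3 b)$ whose symbol is supported away from the origin, and a commutator $[\Delta_q^v,S_{j-1}^v a^3]\Delta_j^v(\partial_3 b)$. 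Integration by parts in the diagonal piece combined with the divergence-free identity $\partial_3 a^3=-\nabla_h\cdot a^h$ trades the offending vertical derivative for a horizontal one controlled by $\norm{\nabla_h a}_{0,s}$; the correction term is estimated by Bernstein; the commutator is handled by Lemma~\ref{commutator.lemma}. The remaining pieces $T^v_{\partial_3 b}(a^3)$ and $R^v(a^3,\partial_3 b)$ receive analogous treatment. Summing over $q$ produces $|\langle a^3\partial_3 b,b\rangle_{0,\delta}|\lesssim\norm{a}_{1,s}\,\norm{b}_{1/2,\delta}^{2}$, which supplies the missing first term of \eqref{eq33}. The only technical subtlety is the bookkeeping of the dyadic sums, which one must verify converges for the full range $\delta\in[0,s]$, $s\in(1/2,1]$, rather than only the specific relation $s_{a}-s_{b}=1$ treated in Proposition~1.

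For \eqref{eq34}, decompose $\langle\rho,a^3\rangle_{0,s}=\sum_{q\ge -1}2^{2qs}\langle\Delta_q^v\rho,\Delta_q^v a^3\rangle$ and isolate the low block $q=-1$. Cauchy--Schwarz in $L^2$ bounds it by the dyadic prefactor times $\norm{\rho}_{L^2}\,\norm{a}_{L^2}$, which contributes the $\tfrac14\norm{\rho}_{L^2}\norm{a}_{L^2}$ piece. For $q\ge 0$, the spectral localization in $|\xi_3|\sim 2^q$ combined with $\nabla\cdot a=0$ gives $\norm{\Delta_q^v a^3}_{L^2}\lesssim 2^{-q}\norm{\Delta_q^v\nabla_h\cdot a^h}_{L^2}$, so that
\begin{equation*}
\sum_{q\ge 0}2^{2qs}|\langle\Delta_q^v\rho,\Delta_q^v a^3\rangle|\lesssim\sum_{q\ge 0}2^{q(s-1)}\,\norm{\Delta_q^v\rho}_{L^2}\cdot 2^{qs}\norm{\Delta_q^v\nabla_h a}_{L^2},
\end{equation*}
and since $2^{q(s-1)}\le 1$ for $s\le 1$, a final Cauchy--Schwarz in $q$ together with Plancherel produces $\norm{\rho}_{L^2}\,\norm{\nabla_h a}_{0,s}$, completing the proof.
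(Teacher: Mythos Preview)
Your proposal is correct and follows essentially the same strategy as the paper's proof. Both split \eqref{eq33} into horizontal and vertical pieces, handle the horizontal part by duality together with Lemma~\ref{productrule1}, and treat the vertical part $\langle a^3\partial_3 b,b\rangle_{0,\delta}$ via the vertical Bony decomposition combined with the commutator Lemma~\ref{commutator.lemma} and the identity $\partial_3 a^3=-\nabla_h\cdot a^h$ (the paper merely groups $T^v_{\partial_3 b}(a^3)$ and $R^v$ into a single sum $\sum_{k\ge q-N_0}S^v_{k+2}(\partial_3 b)\Delta_k^v a^3$ rather than keeping three pieces, which is cosmetic); and both prove \eqref{eq34} by isolating the block $q=-1$ and using the divergence-free condition on the modes $q\ge 0$ exactly as you describe.
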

\begin{proof}
In order to prove \eqref{eq33}, we follow the same approach as in \cite{Chemin}, so we write
\begin{equation}
\left\langle a\cdot \nabla b ,  b \right\rangle_{0,\delta} = \left\langle  a^h\cdot \nabla_h b ,  b \right\rangle_{0,\delta} +\left\langle  a^3 \partial_3 b , b \right\rangle _{0,\delta}. \label{eq35}
\end{equation}
We remark next that Lemma \ref{productrule1} gives $H^{\frac{1}{2},s} \times H^{0,\delta} \hookrightarrow H^{-\frac{1}{2},\delta}$, which implies 
\begin{align*}
\big|\left\langle  a^h\cdot \nabla_h b ,  b \right\rangle_{0,\delta}\big| &\lesssim \norm b_{\frac{1}{2},\delta}\norm {a^h\cdot \nabla_h b}_{-\frac{1}{2},\delta}\\
&\lesssim \norm b_{\frac{1}{2},\delta}\norm a_{\frac{1}{2},s} \norm b_{1,\delta} .
\end{align*}
It remains now only to estimate the second term in the right hand side of \eqref{eq35}. Indeed, Bony's decomposition tells that
\begin{equation}\label{000}
\Delta_q^v(a^3\partial_3b)= \Delta_q^v\bigg(\sum_{k\geq q-N_0}S_{k+2}(\partial_3b) \Delta_k^v a^3 + \sum_{|q-k|\leq N_0}S_{k-1}a^3 \Delta_k^vb \bigg).
\end{equation}
Note then firstly that 
\begin{align*}
\left\| S_{k+2}(\partial_3 b)\right\|_{L^4_hL^2_v}&\leq  \displaystyle\sum_{m\leq k+1} 2^{m(1-\delta)}2^{\delta m}\left\|\Delta_m^vb\right\|_{L^4_hL^2_v} 
\leq \bigg(\displaystyle\sum_{m\leq k+1} 2^{2m(1-\delta)}\bigg)^{\frac{1}{2}}\left\|b \right\|_{\frac{1}{2},\delta} \\
&\lesssim 2^{k(1-\delta)}\left\|b \right\|_{\frac{1}{2},\delta}.
\end{align*}
Moreover, by using the fact that $\partial_3 a^3 = - \nabla_h \cdot u^h$ together with Lemma \ref{ber}, we obtain
\begin{align*}
\norm {\Delta_k^v a^3}_{L^2}&\leq C 2^{-k}\norm {\Delta_k^v  \nabla_h \cdot u^h}_{L^2}.
\end{align*}
This implies
\begin{align*}
\bigg|\left\langle \Delta_q^v \sum_{k\geq q-N_0}S_{k+2}(\partial_3b) \Delta_k^v a^3 , \Delta_q^v b \right\rangle \bigg|&\lesssim \norm {\Delta_q^v b}_{L^4_hL^2_v} 2^{q/2}\sum_{k\geq q-N_0}\norm {S_{k+2}(\partial_3b)}_{L^4_hL^2_v} \norm {\Delta_k^v a^3}_{L^2}\\
& \lesssim \norm {\Delta_q^v b}_{L^4_hL^2_v} 2^{q/2}\sum_{k\geq q-N_0}2^{-k\delta}\left\|b \right\|_{\frac{1}{2},\delta}\norm {\Delta_k ^v \nabla_h u}_{L^2}\\
&\lesssim \left\|b \right\|_{\frac{1}{2},\delta} \norm {u}_{1,s} 2^{-q\delta}c_q \norm { b}_{\frac{1}{2},\delta} 2^{q/2}\sum_{k\geq q-N_0}2^{-k(\delta+s)} c_k\\
&\lesssim \left\|b \right\|_{\frac{1}{2},\delta}^2 \norm {u}_{1,s} 2^{-2q\delta}c_q 2^{q(\frac{1}{2}-s)}.
\end{align*}
Hence, thanks to the assumption $s>\frac{1}{2}$, we infer that
\begin{equation*}
\bigg|\left\langle \sum_{k\geq -1}S_{k+2}(\partial_3b) \Delta_k^v a^3 , b \right\rangle_{0,\delta}\bigg|\lesssim \left\|b \right\|_{\frac{1}{2},\delta}^2 \norm {u}_{1,s}.
\end{equation*}
In order to estimate the second term in the right hand side of \eqref{000}, we use the decomposition proposed in \cite{Chemin}
\begin{align*}
&\langle \Delta_q^v\sum_{|q-k|\leq N_0}S_{k-1}a^3 \Delta_k^vb, \Delta_q^v b\rangle =\langle S_q(a^3)\partial_3\Delta_q^v b, \Delta_q^v b\rangle + \langle\sum_{|q-k|\leq N_0} \big[\Delta_q^v, S_{k-1}a^3 \big] \partial_3\Delta_k^v b, \Delta_q^v b\rangle \\ 
&\phantom{tototototo} + \langle\sum_{|q-k|\leq N_0}\big(S_q -S_{k-1}\big)a^3 \Delta_k^v \partial_3b, \Delta_q^v b\rangle 
\bydef J_1^q+J_2^q+J_3^q.
\end{align*} 
By using an integration by parts, we obtain
$$
J_1^q = \left\langle S_q(a^3)\partial_3\Delta_q^v b , \Delta_q^v b \right\rangle = \frac{1}{2} \left\langle S_q(\partial _3a^3)\Delta_q^v b , \Delta_q^v b \right\rangle,
$$
which shows that this term can be estimated in the same way as $\langle \nabla_h a\cdot  b ,  b\rangle$ due to the divergence free condition. We get
$$
\big|\sum_{q\geq -1} 2^{2q\delta} \left\langle S_q(a^3)\partial_3\Delta_q^v b , \Delta_q^v b \right\rangle \big|\lesssim \norm a_{1,s} \norm b_{\frac{1}{2},\delta}^2.
$$
Next from Lemma \ref{commutator.lemma}, it easily follows
\begin{align*}
\big|\sum_{q\geq -1} 2^{2q\delta} J_2^q \big|= \bigg|\sum_{q\geq -1} 2^{2q\delta} \left\langle \sum_{|q-k|\leq N_0} \big[\Delta_q^v, S_{k-1}a^3 \big] \partial_3\Delta_k^v b , \Delta_q^v b \right\rangle\bigg| &\lesssim \norm a_{1,s}\sum_{q\geq -1} 2^{2q\delta} \norm {\Delta_q^v b}_{\frac{1}{2},0}\\
&\lesssim \norm a_{1,s} \norm b _{\frac{1}{2},\delta}^2.
\end{align*}
Finally, to estimate $J_3^q$, we use the fact that the support of $S_q^v-S_{k-1}^v$ is far from zero in Fourier side.  This gives in particular
$$\norm {(S_q^v-S^v_{k-1}) a^3}_{L^2_h(L^\infty_v)} \lesssim 2^{-k} \norm {(S_q^v-S^v_{k-1})\partial_3 a^3}_{L^2_h(L^\infty_v)}. $$
Thus, we obtain
\begin{align*}
J_3^q \lesssim \sum_{|q-k|\leq N_0} 2^{q-k} \norm {(S_q^v-S^v_{k-1})\partial_3 a^3}_{L^2_h(L^\infty_v)} \norm{\Delta_k^v b}_{L^4_h(L^2_v)} \norm{\Delta_q^v b}_{L^4_h(L^2_v)} .
\end{align*}
By using the fact that $\div a =0 $ and $s>\frac{1}{2}$, we end up with
$$\big|\sum_{q\geq -1} 2^{2q\delta} J_3^q \big|\lesssim \norm a_{1,s} \norm b _{\frac{1}{2} ,\delta}^2.$$
 This ends the proof of \eqref{eq33}. \\
In order to prove \eqref{eq34}, we use again the fact that $\div a=0$ together with $s-1\leq 0$, to obtain
\begin{align*}
\big|\left\langle \rho,a^3 \right\rangle_{0,s}\big|&=\big|2^{-2s}\langle S_0^v \rho ,S_0^v a^3\rangle + \displaystyle\sum_{q\geq 0} 2^{2qs} \langle \Delta_q^v \rho , \Delta_q^v a^3 \rangle\big|\\
& \leq \frac{1}{4}\norm {S_0^v\rho}_{L^2}\norm {S^v_0a}_{L^2}+\displaystyle\sum_{q\geq 0}\norm {\Delta_q^v \rho}_{L^2} 2^{q(s-1)}  2^{qs}\norm {\Delta_q ^v\partial_3 a^3}_{L^2}\\
&\leq \frac{1}{4} \norm {S^v_0\rho}_{L^2}\norm {S^v_0a}_{L^2}+\displaystyle\sum_{q\geq 0}\norm {\Delta_q^v \rho}_{L^2} 2^{qs} \norm {\Delta_q^v \nabla_h a}_{L^2}\\
&\leq \frac{1}{4} \norm {\rho}_{L^2} \norm {a}_{L^2} + \norm {\rho}_{L^2}\norm {\nabla_h a}_{0,s}.
\end{align*}
This concludes the proof of Lemma \ref{lastlemma}
\end{proof}
\subsection*{Proof of Theorem \ref{th3}}
\begin{proof}[]\\
\noindent The uniqueness part of Theorem \ref{th3} is a direct consequence of Theorem \ref{th1}. Hence, it remains only to prove the existence part. This can be done in a similar way than explained in the proof of Theorem \ref{th2}, and in particular the construction of the approximate sequence $(u_n,\rho_n)$ does not add any difficulty. However, the uniform bounds must now be obtained in some new adequate norms. For simplicity, in the following we will drop the index of the approximate sequence. \\   

\noindent We apply $\Delta_q^v$ in both equations for $u$ and $\rho$ from \eqref{NSB_h}, then we multiply the first equation by $\Delta_q^v u$, the second one by $\Delta_q^v \rho$ and we sum over $q\geq -1$ to obtain
\begin{equation}\label{ueq}
\frac{d}{2dt}\norm {u(t)}_{0,s}^2+  \norm {\nabla_h u(t)}_{0,s}^2\leq \big|\left\langle u\cdot \nabla u, u \right\rangle_{0,s}\big|+ \big|\left\langle \rho, u^3 \right\rangle_{0,s}\big|.
\end{equation}
\begin{equation}\label{rhoeq}
\frac{d}{2dt}\norm {\rho(t)}_{0,\delta}^2+  \norm {\nabla_h \rho(t)}_{0,\delta}^2\leq\big| \left\langle u\cdot \nabla \rho, \rho \right\rangle_{0,\delta}\big|.
\end{equation}
We prove firstly an uniform bound for $u$ by using the $L^2$-energy estimate of $\rho$ and $u$. Indeed, by taking $a=b=u$ in Lemma \ref{lastlemma}, then \eqref{ueq} gives
\begin{equation}\label{exist}
\frac{d}{dt}\norm u_{0,s}^2+  2\norm {\nabla_h u}_{0,s}^2 \leq \norm {\rho}_{L^2} \norm u_{L^2} +  \norm {\nabla_h u}_{0,s}^2 + \norm {\rho}_{L^2}^2 + \widetilde{C}_s \norm u_{0,s}\norm {\nabla_h u}_{0,s}^2,
\end{equation}
where $\widetilde{C}_s$ is a non negative constant that depends on $s$.
Let us assume that
\begin{equation}\label{conditionT}
\norm {u_0}_{0,s}^2+ T\norm{\rho_0}_{L^2}\bigg(\norm{u_0}_{L^2}+\norm{\rho_0}_{L^2}\big(1+\frac{T}{2}\big) \bigg)< C_s^2 < \frac{1}{4\widetilde{C}_s^2},
\end{equation}
which ensures that $\norm {u_0}_{0,s}< C _s <\frac{1}{2 \widetilde{C}_s}$.\\
Let us now assume that there exists $T^{max}\in (0,T)$ satisfying
$$T^{max}\bydef\inf \{ t\in[0,T]: \norm {u(t)}_{0,s}=C_s\}.$$
It follows that, for all $ t\in [0,T^{max})$: $\norm {u(t)}_{0,s}<C_s < \frac{1}{2\widetilde{C}_s} $.\\
By using this last inequality in $\eqref{exist}$, and by integration on $[0,t)$, we obtain for all $t\in [0,T^{max})$
$$
\norm {u(t)}_{0,s}^2 + \frac{1}{2} \int_0^t\norm {\nabla_h u(\tau)}_{0,s}^2d\tau \leq \norm {u_0}_{0,s}^2 + \int_0^t \big( \norm {\rho(\tau)}_{L^2}^2 + \norm {u(\tau)}_{L^2}\norm {\rho(\tau)}_{L^2}\big) d\tau .
$$
Hence, by using the $L^2$ energy estimate for $\rho$ and $u$ given by \eqref{uL2energy} and \eqref{rhoL2energy}, we infer that 
$$
\norm {u(t)}_{0,s}^2 \leq \norm {u_0}_{0,s}^2+ t\norm{\rho_0}_{L^2}\bigg(\norm{u_0}_{L^2}+\norm{\rho_0}_{L^2}\big(1+\frac{t}{2}\big) \bigg), \quad \forall t\in [0,T^{max}),$$
then by using \eqref{conditionT} and passing to the limit $t\longrightarrow T^{max}$, we obtain
$$
C_s^2\leq \norm {u_0}_{0,s}^2+ T\norm{\rho_0}_{L^2}\bigg(\norm{u_0}_{L^2}+\norm{\rho_0}_{L^2}\big(1+\frac{T}{2}\big) \bigg)< C_s^2,
$$
which contradicts the existence of $T^{max}$ and gives for all $t\in [0,T]$: $\norm {u(t)}_{0,s}< C_s$. Therefore we have proved an uniform bound of $u$ in 
$L^\infty_TH^{0,s}$. Plugging this bound into \eqref{exist} gives a bound for $u$ in $L^2_TH^{1,s}$. Note that by using an argument of interpolation, this also gives a bound of $u$ in $L^{4}_TH^{\frac{1}{2},s}$. \\ 

\noindent Next, we can estimate the right hand side of \eqref{rhoeq} by using Lemma \ref{lastlemma} with $a=u$ and $b=\rho$. We obtain after some calculations
\begin{align*}
\frac{d}{dt}\norm {\rho(t)}_{0,\delta}^2 &+2\norm {\nabla_h \rho(\tau)}_{0,\delta}^2 d\tau \leq \norm {\nabla_h\rho}_{0,\delta}^2 + C_{s} A(t)\norm {\rho(t)}_{0,\delta}^2,
\end{align*} 
where 
$$ 
 A(\cdot)= \norm {\nabla_h u(\cdot)}_{0,s}^2 + \norm {u(\cdot)}_{\frac{1}{2},s}^4 \in L^1\big((0,T)\big).
$$
Finally, by applying Gronwall's Lemma we obtain the adequate bound for $\rho$.
\end{proof}
\section*{Acknowledgments} 
\begin{itemize}
\item The authors are very grateful to the referee for his/her valuable and helpful remarks and comments.
\item This work has been done when the second author was a PhD student at the University of C\^ote d'Azur - Nice - France, under the supervision of Fabrice Planchon and Pierre Dreyfuss. In particular, the second author would like  to thank his supervisors for the accomplished work.
\end{itemize}
\newpage

\end{document}